\DeclareMathOperator{\mor}{Mor}
\DeclareMathOperator{\dgmor}{\underline{Mor}}
\DeclareMathOperator{\Hom}{Hom}
\DeclareMathOperator{\Fun}{Fun}
\DeclareMathOperator{\Fundg}{Fun_{dg}}
\DeclareMathOperator{\Ob}{Ob}
\DeclareMathOperator{\Funex}{Fun_{ex}}
\DeclareMathOperator{\Funinf}{Fun_\infty}
\DeclareMathOperator{\RHom}{\mathbb{R}\underline{Hom}}
\DeclareMathOperator{\Perf}{Perf}
\DeclareMathOperator{\Perfdg}{Perf_{dg}}
\DeclareMathOperator{\car}{char}
\DeclareMathOperator{\qcoh}{QCoh}
\newcommand{\bbcat}{\mathbb}
\newcommand{\cat}{\mathbf}
\newcommand{\opp}[1]{{#1}^{\mathrm{op}}}
\newcommand{\kat}{\mathsf}
\newcommand{\lder}{\mathbb L}
\newcommand{\rder}{\mathbb R}
\newcommand{\lotimes}{\otimes^\lder}
\newcommand{\qis}{\overset{\mathrm{qis}}{\approx}}
\newcommand{\qe}{\overset{\mathrm{qe}}{\approx}}
\newcommand{\tria}[1]{\mathrm{tria}(#1)}
\newcommand{\pretr}[1]{\mathrm{pretr}(#1)}
\newcommand{\per}[1]{\mathrm{per}(#1)}
\newcommand{\perdg}[1]{\mathrm{per_{dg}}(#1)}
\newcommand{\basering}[1]{\mathbf{#1}}
\newcommand{\enrich}[1]{\underline{#1}}
\newcommand{\Mod}[1]{\mathsf{Mod}(#1)}
\newcommand{\comp}[1]{\mathsf{C}(#1)}
\newcommand{\hocomp}[1]{\mathsf{K}(#1)}
\newcommand{\dercomp}[1]{\mathsf{D}(#1)}
\newcommand{\compdg}[1]{\mathsf{C}_\mathrm{dg}(#1)}
\newcommand{\dercat}[1]{\mathfrak{D}(#1)}
\newcommand{\dercatdg}[1]{\mathfrak{D}_{\mathrm{dg}}(#1)}
\newcommand{\hproj}[1]{\mathrm{h\textrm{-}proj}(#1)}
\newcommand{\rqrep}[1]{\mathrm{qrep}^r(#1)}
\newcommand{\HOdir}[2]{\Phi^{#1 \to #2}}
\newtheorem{thm}{Theorem}[section]
\newtheorem{prop}[thm]{Proposition}
\newtheorem{coroll}[thm]{Corollary}
\newtheorem{lemma}[thm]{Lemma}
\theoremstyle{remark}
\newtheorem{remark}[thm]{Remark}
\newtheorem{example}[thm]{Example}
\theoremstyle{definition}
\newtheorem{defin}[thm]{Definition}
\numberwithin{equation}{section}
\title{The uniqueness problem of dg-lifts and Fourier-Mukai kernels}
\author{Francesco Genovese}
\address{Dipartimento di Matematica ``F. Casorati'', Università di Pavia, \\ Via Ferrata 5, 27100 Pavia (PV), Italy}
\email{francesco.genovese01@ateneopv.it}
\subjclass[2010]{14F05, 18E30}
\keywords{Dg-categories, quasi-functors, Fourier-Mukai functors}
\begin{document}
\begin{abstract}
We address the uniqueness problem of dg-lifts of exact functors between triangulated categories, and its relationship with the uniqueness problem of Fourier-Mukai kernels. We prove a positive result under a vanishing hypothesis on the functors, employing $A_\infty$-categorical techniques.
\end{abstract}

\maketitle
\section{Introduction} \label{section:geometric_motivation}
\emph{Triangulated categories} are nowadays a classical topic in mathematics, with many applications in geometry and algebra. In particular, they arise in algebraic geometry as derived categories of (quasi-)coherent sheaves on schemes. Their serious technical drawbacks (in particular, the non functoriality of cones) suggest that they are actually ``shadows'' of more complicated, higher categorical structures. A popular way to enhance the understanding of triangulated categories is to employ \emph{differential graded (dg-) categories}, namely, categories enriched in complexes of modules over a ground field $\basering k$ (more in general, $\basering k$ can be taken as a commutative ring). A \emph{(dg-)enhancement} of a triangulated category $\cat T$ is a pretriangulated dg-category $\cat A$ such that $H^0(\cat A)$ is equivalent to $\cat T$; with the term \emph{pretriangulated dg-category} we mean a dg-category which, roughly speaking, contains shifts and functorial cones up to homotopy equivalence. If $\cat A$ is a pretriangulated dg-category, then its zeroth cohomology $H^0(\cat A)$ has a natural structure of triangulated category; a dg-functor $F \colon \cat A \to \cat B$ (which is simply a functor of enriched categories) induces an exact functor $H^0(F) \colon H^0(\cat A) \to H^0(\cat B)$.  Unfortunately, dg-functors do not retain the homotopical structure of dg-categories; so, we must consider more complicated -- homotopy relevant -- replacements, namely, \emph{quasi-functors}. They can be described concretely as \emph{right quasi-representable bimodules} (see Proposition \ref{prop:rep_quasifun}) or as \emph{$A_\infty$-functors} (see Proposition \ref{prop:RHom_Ainf}). 

Quasi-functors $\cat A \to \cat B$ form a dg-category (defined up to quasi-equivalence), which is denoted by $\RHom(\cat A, \cat B)$. They yield ordinary functors by taking cohomology, namely, there is a functor:
\begin{equation}
H^0 = \HOdir{\cat A}{\cat B} \colon H^0(\RHom(\cat A, \cat B)) \to \Fun(H^0(\cat A),H^0(\cat B)). \nomenclature{$\HOdir{\cat A}{\cat B}$}{The functor which maps a quasi-functor to its $H^0$}
\end{equation}
If $\cat A$ and $\cat B$ are pretriangulated, then $\HOdir{\cat A}{\cat B}$ is viewed as taking values in the category of exact functors $\Funex(H^0(\cat A), H^0(\cat B))$. \nomenclature{$\Funex(\cat T, \cat T')$}{The category of exact functors between triangulated categories}
By definition, a \emph{dg-lift} of an exact functor $\overline{F} \colon H^0(\cat A) \to H^0(\cat B)$ is a quasi-functor $F \colon \cat A \to \cat B$ such that $H^0(F) \cong \overline{F}$. The \emph{uniqueness problem of dg-lifts}, which is the main topic of the work, amounts to studying whether, given quasi-functors $F, G \colon \cat A \to \cat B$, $H^0(F) \cong H^0(G)$ implies that $F \cong G$. The relevance of this problem lies in the fact that, in the geometric cases, it is essentially equivalent to the \emph{uniqueness problem of Fourier-Mukai kernels}. Let us make this claim precise.

Let $X$ be a quasi-compact and quasi-separated scheme (over $\basering k$). We denote by $\dercat{\qcoh(X)}$ \nomenclature{$\dercat{\qcoh(X)}$}{The derived category of quasi-coherent sheaves on a scheme $X$} the derived category of quasi-coherent sheaves on $X$. The subcategory of compact objects of $\dercat{\qcoh(X)}$ coincides with the category of perfect complexes $\Perf(X)$ \nomenclature{$\Perf(X)$}{The category of perfect complexes of quasi-coherent sheaves on a scheme $X$}. Given two schemes $X$ and $Y$, there is a functor:
\begin{equation}
\Phi^{X \to Y}_- \colon \dercat{\qcoh(X \times Y)} \to \Funex(\Perf(X), \dercat{\qcoh(Y)}), \nomenclature{$\Phi^{X \to Y}_{\mathcal E} = \Phi_{\mathcal E}$}{The Fourier-Mukai functor with kernel $\mathcal E$}
\end{equation}
which maps a complex $\mathcal E \in \dercat{\qcoh(X \times Y)}$ to its \emph{Fourier-Mukai functor}
\begin{equation*}
\Phi^{X \to Y}_{\mathcal E} = \Phi_{\mathcal E} \colon \Perf(X) \to \dercat{\qcoh(Y)},
\end{equation*}
which is defined by:
\begin{equation*}
\Phi_{\mathcal E}(-) = \rder(p_2)_*(\mathcal E \lotimes p_1^*(-)),
\end{equation*}
where $p_1 \colon X \times Y \to X$ and $p_2 \colon X \times Y \to Y$ are the natural projections. If an exact functor $F \colon \Perf(X) \to \dercat{\qcoh(Y)}$ is such that $F \cong \Phi_{\mathcal E}$, we say thay $\mathcal E$ is a \emph{Fourier-Mukai kernel} of $F$. Current research is devoted to investigating the properties of $\Phi^{X \to Y}_-$ (see \cite{canonaco-survey} for a survey); for instance, the uniqueness problem of Fourier-Mukai kernels amounts to studying if $\Phi^{X \to Y}_{\mathcal E} \cong \Phi^{X \to Y}_{\mathcal E'}$ implies $\mathcal E \cong \mathcal E'$.  In general, we know that this is false: we can find counterexamples when $X=Y$ is an elliptic curve (see \cite{canonaco-nonuniquenessFM}). However, we do obtain a positive answer in some particular cases: for instance, it is known that fully faithful functors $F \colon \Perf(X) \to \dercat{\qcoh{Y}}$ admit uniquely determined Fourier-Mukai kernels, when $X$ and $Y$ are smooth projective (see \cite[Theorem 2.2]{orlov-equivalences-derived} for the original formulation).

Now, let us see how this is related to dg-categories and quasi-functors. If $X$ is a scheme (over $\basering k$), then the derived category $\dercat{\qcoh(X)}$ has an enhancement, which we call $\dercatdg{\qcoh(X)}$,\nomenclature{$\dercatdg{\qcoh(X)}$}{A chosen dg-enhancement of $\dercat{\qcoh(X)}$} choosing it once and for all and identifying $H^0(\dercatdg{\qcoh(X)}) = \dercat{\qcoh(X)}$. Taking the dg-subcategory of $\dercatdg{\qcoh(X)}$ whose objects correspond to $\Perf(X)$, we find an enhancement $\Perfdg(X)$ \nomenclature{$\Perfdg(X)$}{A chosen dg-enhancement of $\Perf(X)$} of the category of perfect complexes. A remarkable theorem by B. Toën tells us that, under suitable hypotheses, \emph{every quasi-functor has a unique Fourier-Mukai kernel}, in the following sense:
\begin{thm}[Adapted from {\cite[Theorem 8.9]{toen-morita}}] \label{thm:toen_FM}
Let $X$ and $Y$ be quasi-compact and separated schemes over $\basering k$. Then, there is an isomorphism in $\kat{Hqe}$:
\begin{equation} \label{eq:toen_FM}
\dercatdg{\qcoh(X \times Y)} \xrightarrow{\sim} \RHom(\Perfdg(X), \dercatdg{\qcoh(Y)}.
\end{equation}
\end{thm}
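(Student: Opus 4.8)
The plan is to deduce the statement from B. Toën's derived Morita theory by recognising all three dg-categories involved as ``completions'' of the dg-categories of perfect complexes, and then computing the internal Hom on the right-hand side through the bimodule description of quasi-functors of Proposition~\ref{prop:rep_quasifun}. Concretely, I would first reduce the claim to the purely dg-categorical identity
\[
\RHom\bigl(\Perfdg(X), \hproj{\Perfdg(Y)}\bigr) \qe \hproj{\opp{\Perfdg(X)} \lotimes \Perfdg(Y)},
\]
where $\hproj{\ccat A}$ denotes the dg-category of h-projective right $\ccat A$-modules (a dg-model of $\dercat{\ccat A}$), and then match both sides with the geometric objects $\dercatdg{\qcoh(Y)}$ and $\dercatdg{\qcoh(X \times Y)}$ respectively.

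The first geometric input is that, for a quasi-compact and separated scheme $Z$, the derived category $\dercat{\qcoh(Z)}$ is compactly generated with subcategory of compact objects equal to $\Perf(Z)$ (Neeman, Bondal--Van den Bergh). By the general theorem on compactly generated dg-categories, the chosen enhancement $\dercatdg{\qcoh(Z)}$ is therefore quasi-equivalent to $\hproj{\Perfdg(Z)}$, i.e. to a dg-model of the derived category of the dg-category $\Perfdg(Z)$. Applying this to $Z = Y$ identifies the target $\dercatdg{\qcoh(Y)}$ with $\hproj{\Perfdg(Y)}$, and applying it to $Z = X \times Y$ reduces the left-hand side to $\hproj{\Perfdg(X \times Y)}$.

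The second, and most delicate, geometric input is a Künneth-type identification $\Perfdg(X \times Y) \qe \Perfdg(X) \lotimes \Perfdg(Y)$ in $\kat{Hqe}$. The external product $(E,F) \mapsto p_1^*(E) \lotimes p_2^*(F)$ lifts to a dg-functor $\Perfdg(X) \lotimes \Perfdg(Y) \to \Perfdg(X \times Y)$, and I would prove it is a Morita equivalence by checking, first, that its essential image generates $\Perf(X \times Y)$ as a thick subcategory (using quasi-compactness and separatedness), and, second, that it is quasi-fully faithful, which amounts to the Künneth formula
\[
\RHom_{X \times Y}\bigl(p_1^*E \lotimes p_2^*F,\, p_1^*E' \lotimes p_2^*F'\bigr) \qis \RHom_X(E,E') \lotimes_{\basering k} \RHom_Y(F,F'),
\]
obtained from flat base change along the projections. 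Combined with the derived duality $E \mapsto \RHom_X(E, \mathcal O_X)$, which yields a quasi-equivalence $\opp{\Perfdg(X)} \qe \Perfdg(X)$, this lets me rewrite $\hproj{\Perfdg(X \times Y)}$ as $\hproj{\opp{\Perfdg(X)} \lotimes \Perfdg(Y)}$. I expect this Künneth/generation step to be the main obstacle: the quasi-full-faithfulness must be controlled using only separatedness, without recourse to properness or smoothness.

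For the dg-categorical core, Proposition~\ref{prop:rep_quasifun} presents a quasi-functor $\Perfdg(X) \to \hproj{\Perfdg(Y)}$ as a right quasi-representable $\Perfdg(X)$-$\hproj{\Perfdg(Y)}$-bimodule; since the target is already cocomplete, the right-quasi-representability constraint becomes vacuous, and Toën's tensor--Hom adjunction in $(\kat{Hqe}, \lotimes)$ yields exactly the displayed identity of the first paragraph. Assembling the identifications produces the chain
\[
\RHom(\Perfdg(X), \dercatdg{\qcoh(Y)}) \qe \hproj{\opp{\Perfdg(X)} \lotimes \Perfdg(Y)} \qe \hproj{\Perfdg(X \times Y)} \qe \dercatdg{\qcoh(X \times Y)},
\]
whose inverse is the asserted isomorphism in $\kat{Hqe}$. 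Finally, I would verify that under this equivalence a kernel $\mathcal E \in \dercat{\qcoh(X \times Y)}$ corresponds to a quasi-functor lifting $\Phi_{\mathcal E}$, by unwinding the bimodule attached to $\mathcal E$ and comparing it with $\rder(p_2)_*(\mathcal E \lotimes p_1^*(-))$; this last check is essentially formal once the Morita identifications are in place.
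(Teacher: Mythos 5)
The paper offers no proof of this statement: it is imported, as an adaptation, from To\"en's derived Morita theory, so there is no internal argument to compare against. Your outline is in substance a faithful reconstruction of To\"en's actual proof. The three ingredients you isolate --- (i) the identification $\dercatdg{\qcoh(Z)} \qe \hproj{\Perfdg(Z)}$ coming from compact generation of $\dercat{\qcoh(Z)}$ by perfect complexes, (ii) the K\"unneth/Morita equivalence $\Perfdg(X) \lotimes \Perfdg(Y) \qe \Perfdg(X \times Y)$ combined with the self-duality $\opp{\Perfdg(X)} \qe \Perfdg(X)$ given by $E \mapsto E^{\vee}$, and (iii) the tensor--Hom identity $\RHom(\cat A, \hproj{\cat B}) \qe \hproj{\cat B \otimes^{\lder} \opp{\cat A}}$ --- are precisely To\"en's Lemma on $L_{pe}(X\times Y)$, the duality of perfect complexes, and his Theorem 7.2, and you correctly flag the K\"unneth step as the one carrying the geometric content (in To\"en's formulation it requires flatness over the base, which is automatic here since $\basering k$ is a field). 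Two caveats. First, step (i) as stated applies to a specific model of the enhancement; since the paper fixes $\dercatdg{\qcoh(Z)}$ as an arbitrary chosen enhancement, you must either work with the standard h-injective (or h-flat) model throughout or invoke uniqueness of enhancements to transport the conclusion --- also, your remark that right quasi-representability ``becomes vacuous'' for a cocomplete target is the right intuition but is not literally immediate from Proposition \ref{prop:rep_quasifun}; it is exactly what To\"en's Theorem 7.2 establishes. Second, your closing claim that the compatibility of the resulting equivalence with $\mathcal E \mapsto \Phi_{\mathcal E}$ is ``essentially formal'' undersells it: in this paper that compatibility is the content of the separate Theorem \ref{thm:dglift_fouriermukai}, due to Lunts and Schn\"urer, which needs extra hypotheses (Noetherian, perfect $=$ strictly perfect); it is not part of the statement at hand and should be dropped rather than asserted to be formal.
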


Next, a result adapted from \cite[Theorem 1.1]{lunts-schnurer-newenh} gives the desired ``bridge'' between Fourier-Mukai functors and quasi-functors between dg-categories:
\begin{thm} \label{thm:dglift_fouriermukai}
Let $X$ and $Y$ be Noetherian separatedproblem is unsolved, even if some partial results have been obtained. For instance, schemes over $\basering k$ such that $X \times Y$ is Noetherian and the following condition holds for both $X$ and $Y$: any perfect complex is isomorphic to a strictly perfect complex (i. e. a bounded complex of vector bundles). Then, there is a commutative diagram (up to isomorphism):
\begin{equation}
\begin{gathered}
\xymatrix{
\dercat{\qcoh(X \times Y)} \ar[r]^-{\sim} \ar[dr]_-{\Phi^{X \rightarrow Y}_{-}} & H^0(\RHom(\Perfdg(X), \dercatdg{\qcoh(Y)} ) \ar[d]^{\HOdir{\Perfdg(X)}{\dercatdg{\qcoh(Y)}}} \\
& \Funex(\Perf(X), \dercat{\qcoh{Y})},
}
\end{gathered}
\end{equation}
where the horizontal equivalence is induced by \eqref{eq:toen_FM}.
\end{thm}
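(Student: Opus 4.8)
The plan is to unwind the $\Hqe$-isomorphism \eqref{eq:toen_FM} of Theorem \ref{thm:toen_FM} at the level of objects and to verify that, after applying $H^0$ and then $\HOdir{\Perfdg(X)}{\dercatdg{\qcoh(Y)}}$, one recovers the Fourier--Mukai functor $\Phi_{\mathcal E}$ for each kernel $\mathcal E$. Concretely, fix $\mathcal E \in \dercat{\qcoh(X \times Y)}$ and let $F_{\mathcal E} \colon \Perfdg(X) \to \dercatdg{\qcoh(Y)}$ be the quasi-functor to which the horizontal equivalence sends $\mathcal E$. I must show that $H^0(F_{\mathcal E}) \cong \Phi_{\mathcal E}$ as exact functors $\Perf(X) \to \dercat{\qcoh(Y)}$, naturally in $\mathcal E$, which is precisely the asserted commutativity of the triangle.

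First I would make the horizontal equivalence explicit through the bimodule description of quasi-functors (Proposition \ref{prop:rep_quasifun}). The idea is that the isomorphism of Theorem \ref{thm:toen_FM} identifies $\mathcal E$ with the right quasi-representable $\Perfdg(X)$-$\dercatdg{\qcoh(Y)}$-bimodule
\[
(P, Q) \longmapsto \dgmor_{\dercatdg{\qcoh(Y)}}\bigl(Q, \rder(p_2)_*(\mathcal E \lotimes p_1^* P)\bigr),
\]
obtained by pulling back along $p_1$, tensoring with $\mathcal E$, and pushing forward along $p_2$ inside the chosen dg-enhancements. Here the strict perfection hypothesis on $X$ guarantees that $p_1^* P$ is computed by an honest bounded complex of vector bundles, so that $\mathcal E \lotimes p_1^* P$ and its pushforward are represented functorially at the dg-level; this is exactly the input that promotes the construction from $\dercat{\qcoh}$ to the enhancement, following \cite[Theorem 1.1]{lunts-schnurer-newenh}. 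The second step is to observe that this bimodule is right quasi-representable, with representing object precisely $\Phi_{\mathcal E}(P) = \rder(p_2)_*(\mathcal E \lotimes p_1^* P)$ in $\dercatdg{\qcoh(Y)}$: this is immediate from the Yoneda-type identity above, and it is what makes $F_{\mathcal E}$ a genuine quasi-functor rather than a mere bimodule.

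Granting this, the value of $H^0(F_{\mathcal E})$ on an object $P$ is, by the very definition of $\HOdir{\Perfdg(X)}{\dercatdg{\qcoh(Y)}}$ together with right quasi-representability, the object of $\dercat{\qcoh(Y)}$ represented by the bimodule, namely $\Phi_{\mathcal E}(P)$; this settles commutativity on objects. It then remains to check commutativity on morphisms, i.e.\ that the isomorphisms $H^0(F_{\mathcal E})(P) \cong \Phi_{\mathcal E}(P)$ are natural in $P$ and assemble into an isomorphism of exact functors, and that the whole construction is natural in $\mathcal E$. Both follow by tracking the functoriality of pullback--tensor--pushforward through the identifications and by the naturality of Toën's equivalence, again as carried out in \cite{lunts-schnurer-newenh}.

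I expect the main obstacle to be precisely the first step: the isomorphism of Theorem \ref{thm:toen_FM} is only given abstractly in $\Hqe$ (via a Künneth-type decomposition $\dercatdg{\qcoh(X \times Y)} \simeq \dercatdg{\qcoh(X)} \lotimes \dercatdg{\qcoh(Y)}$ together with the derived Morita identification of internal Homs with bimodules of \cite{toen-morita}), and matching this abstract equivalence with the hands-on Fourier--Mukai construction requires carefully transporting $\mathcal E$ through every identification and checking that no extraneous twist or duality is introduced. This bookkeeping --- controlling $p_1^*$, $\lotimes$ and $\rder(p_2)_*$ at the dg-enhanced level, where the strict perfection and Noetherianity hypotheses are used to guarantee boundedness and the compatibility of the enhancement with these operations --- is the technical heart of the argument, and it is exactly the content imported from \cite[Theorem 1.1]{lunts-schnurer-newenh}.
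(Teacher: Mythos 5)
The paper offers no proof of this statement: it is quoted as an adaptation of \cite[Theorem 1.1]{lunts-schnurer-newenh} and used as a black box, so there is no internal argument to measure your attempt against. Your sketch is a faithful outline of that reference's strategy --- identify the image of $\mathcal E$ under Toën's equivalence with the pullback--tensor--pushforward bimodule, observe right quasi-representability, and read off $H^0$ --- and you correctly place the one genuinely hard step (matching the abstract isomorphism in $\Hqe$ with the concrete kernel construction, using the strict perfection and Noetherianity hypotheses) in the cited work, which is exactly where the paper leaves it.
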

\begin{remark}
The hypotheses of the above theorem are satisfied if both $X$ and $Y$ are quasi-projective.
\end{remark}
The above result tells us that, under suitable hypotheses, the properties of $\Phi^{X \to Y}_-$ are directly translated to those of $\HOdir{\Perfdg(X)}{\dercatdg{\qcoh(Y)}}$. In particular, the dg-lift uniqueness problem for functors $\Perf(X) \to \dercat{\qcoh(Y)}$ (with the above chosen dg-enhancements) is equivalent to the uniqueness problem of Fourier-Mukai kernels.

Next, we state the main theorem of the work, which is purely algebraic. Its proof employs the description of quasi-functors by means of $A_\infty$-functors; even if it involves some rather intricate computations with the $A_\infty$ formalism, it is not conceptually difficult.
\begin{thm}[Theorem \ref{thm:dglift_unique_with_vanishing}]
Let $\cat A$ and $\cat B$ be triangulated dg-categories. Assume that $\cat A$ is the \emph{triangulated hull} (see Subsection \ref{subsec:pretr_hull}) of a $\basering k$-linear category $\bbcat E$. Moreover, let $F, G \colon \cat A \to \cat B$ be quasi-functors satisfying the following vanishing condition:
\begin{equation*}
H^j(\cat B(F(E), F(E'))) \cong 0,
\end{equation*}
for all $j < 0$, for all $E, E' \in \bbcat E$. Then, $H^0(F_{|\mathbb E}) \cong H^0(G_{|\mathbb E})$ implies $F \cong G$.
\end{thm}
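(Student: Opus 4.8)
The plan is to combine the $A_\infty$-functor description of quasi-functors (Proposition \ref{prop:RHom_Ainf}) with the universal property of the triangulated hull, and to build the isomorphism $F \cong G$ by a step-by-step obstruction argument in which the vanishing hypothesis annihilates all obstructions sitting in negative cohomological degrees.

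First I would reduce the problem from $\cat A$ to $\bbcat E$. Since $\cat A$ is the triangulated hull of $\bbcat E$ (Subsection \ref{subsec:pretr_hull}) and $\cat B$ is triangulated, the universal property tells us that restriction along the inclusion $\bbcat E \hookrightarrow \cat A$ induces a quasi-equivalence $\RHom(\cat A, \cat B) \xrightarrow{\sim} \RHom(\bbcat E, \cat B)$; in particular it is fully faithful on $H^0$. Hence it suffices to prove $F_{|\bbcat E} \cong G_{|\bbcat E}$ as quasi-functors $\bbcat E \to \cat B$, for then $F \cong G$ follows. Along the way I record a transfer of the hypothesis: since $H^0(F_{|\bbcat E}) \cong H^0(G_{|\bbcat E})$, every object $F(E)$ is isomorphic to $G(E)$ in $H^0(\cat B)$, hence homotopy equivalent in $\cat B$; therefore the complexes $\cat B(F(E), G(E'))$ and $\cat B(F(E), F(E'))$ are quasi-isomorphic, and the vanishing hypothesis also yields $H^j(\cat B(F(E), G(E'))) \cong 0$ for all $j < 0$ and all $E, E' \in \bbcat E$.

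The heart of the argument is the construction of a closed degree-zero $A_\infty$-natural transformation $\phi = (\phi_n)_{n \geq 0}$ from $F_{|\bbcat E}$ to $G_{|\bbcat E}$ lifting the given natural isomorphism $\eta \colon H^0(F_{|\bbcat E}) \xrightarrow{\sim} H^0(G_{|\bbcat E})$. Because $\bbcat E$ is an ordinary $\basering k$-linear category, its higher structure maps vanish and every input to the $\phi_n$ is closed of degree zero, so the component $\phi_n$ has degree $-n$. I would build the $\phi_n$ by induction on $n$. For $n = 0$ I choose closed degree-zero representatives $\phi_0(E) \in Z^0\cat B(F(E), G(E))$ of the isomorphisms $\eta_E$. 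At the inductive step the equation $m_1 \phi = 0$ expresses $m_1^{\cat B}(\phi_n)$ as a fixed expression $R_n$ in the previously constructed $\phi_{<n}$ and in the Taylor components of $F$ and $G$; a standard computation (using $m_1^2 = 0$ and the lower equations) shows $R_n$ is a cocycle, and solving for $\phi_n$ amounts to showing that its class vanishes in $H^{1-n}(\cat B(F(E_0), G(E_n)))$. For $n = 1$ this class lies in $H^0$ and is zero precisely because $\eta$ is natural; for $n \geq 2$ the obstruction lies in $H^{1-n}$ with $1 - n < 0$, so it vanishes by the transferred hypothesis. This produces the required $\phi$, and since each $\phi_0(E)$ represents the isomorphism $\eta_E$, the transformation $\phi$ is invertible in $H^0\RHom(\bbcat E, \cat B)$. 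Thus $F_{|\bbcat E} \cong G_{|\bbcat E}$, and the reduction above gives $F \cong G$.

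The main obstacle I anticipate is purely bookkeeping: verifying that the expressions $R_n$ are genuine $m_1^{\cat B}$-cocycles and pinning down all signs in the $A_\infty$-natural-transformation equations, so that the obstruction really lives in cohomological degree $1 - n$ as claimed. Once the degree count is confirmed, the vanishing hypothesis does all the conceptual work, and no further geometric or categorical input is required.
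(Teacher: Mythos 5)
Your proposal is correct and follows essentially the same route as the paper: reduction to $\bbcat E$ via the universal property of the triangulated hull, transfer of the vanishing hypothesis from $\cat B(F(E),F(E'))$ to $\cat B(F(E),G(E'))$ using the homotopy equivalences $F(E)\approx G(E)$, an inductive construction of a closed degree-zero $A_\infty$-natural transformation whose degree-$(1-n)$ obstructions vanish by hypothesis for $n\geq 2$ and by naturality for $n=1$, and the fact that $\HOdir{\bbcat E}{\cat B}$ reflects isomorphisms to conclude invertibility. The paper merely packages the same induction as an $A_\infty$-functor into $\dgmor\cat B$ (Lemma \ref{lemma:transform_hot}) and isolates the cocycle verification you defer as ``standard bookkeeping'' into Lemma \ref{lemma:cap5_obstructiontheory}.
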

From this theorem, we obtain a result giving uniqueness of dg-lifts in the case where the source dg-category is an enhancement of the subcategory of compact objects in a suitable Verdier quotient of the derived category of a $\basering k$-linear category (see Theorem \ref{thm:dglift_unique_quotient}). This applies in particular to $\Perf(X)$, when $X$ is a quasi-projective scheme; from this we obtain the following geometric application: 
\begin{thm}[Theorem \ref{thm:cap5_uniqueness_geometricapplication}]
Let $X$ and $Y$ be schemes satisfying the hypotheses of Theorem \ref{thm:dglift_fouriermukai}, with $X$ quasi-projective. Let $\mathcal E, \mathcal E' \in \dercat{\qcoh(X \times Y)}$ be such that
\begin{equation*}
{\Phi^{X \to Y}_\mathcal{E}} \cong {\Phi^{X \to Y}_\mathcal{E'}} \cong F \colon \Perf(X) \to \dercat{\qcoh(Y)},
\end{equation*}
and $\Hom(F(\mathcal O_X(n)), F(\mathcal O_X(m))[j]) = 0$ for all $j < 0$, for all $n,m \in \mathbb Z$. Then $\mathcal E \cong \mathcal E'$.
\end{thm}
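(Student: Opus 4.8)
The plan is to reduce this geometric statement to the purely algebraic uniqueness result, ultimately Theorem \ref{thm:dglift_unique_with_vanishing} in the form adapted to Verdier quotients given by Theorem \ref{thm:dglift_unique_quotient}, using the two bridges provided by Toën's theorem (Theorem \ref{thm:toen_FM}) and Theorem \ref{thm:dglift_fouriermukai}. First I would exploit the quasi-projectivity of $X$: fixing an ample line bundle $\mathcal O_X(1)$, the twists $\{\mathcal O_X(n)\}_{n \in \mathbb Z}$ form a generating family of $\Perf(X)$. Letting $\bbcat E$ be the $\basering k$-linear category with objects indexed by $\mathbb Z$ and $\bbcat E(n,m) = \Hom_{\Perf(X)}(\mathcal O_X(n), \mathcal O_X(m))$, the category $\Perf(X)$ is realized as the subcategory of compact objects in a suitable Verdier quotient of the derived category of $\bbcat E$, with $\Perfdg(X)$ the associated enhancement. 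This is exactly the setting covered by Theorem \ref{thm:dglift_unique_quotient}.

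Next, I would translate the hypotheses across the equivalences. Via Theorem \ref{thm:toen_FM}, the kernels $\mathcal E, \mathcal E'$ correspond to quasi-functors $F_{\mathcal E}, F_{\mathcal E'} \colon \Perfdg(X) \to \dercatdg{\qcoh(Y)}$, and by the commutative diagram of Theorem \ref{thm:dglift_fouriermukai} we have $H^0(F_{\mathcal E}) \cong \Phi_{\mathcal E}$ and $H^0(F_{\mathcal E'}) \cong \Phi_{\mathcal E'}$. Hence the hypothesis $\Phi_{\mathcal E} \cong \Phi_{\mathcal E'} \cong F$ gives $H^0(F_{\mathcal E}) \cong H^0(F_{\mathcal E'})$, and in particular the restrictions to $\bbcat E$ coincide up to isomorphism. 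Using the standard identification $H^j(\dercatdg{\qcoh(Y)}(A,B)) \cong \Hom_{\dercat{\qcoh(Y)}}(A, B[j])$ together with $H^0(F_{\mathcal E})(\mathcal O_X(n)) \cong F(\mathcal O_X(n))$, the assumption $\Hom(F(\mathcal O_X(n)), F(\mathcal O_X(m))[j]) = 0$ for all $j < 0$ becomes precisely the vanishing condition $H^j(\dercatdg{\qcoh(Y)}(F_{\mathcal E}(E), F_{\mathcal E}(E'))) \cong 0$ for $j<0$ and $E,E' \in \bbcat E$ required as input to Theorem \ref{thm:dglift_unique_with_vanishing}.

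With the hypotheses verified, I would apply Theorem \ref{thm:dglift_unique_quotient} to conclude that $F_{\mathcal E} \cong F_{\mathcal E'}$ as quasi-functors, i.e.\ as isomorphic objects of $H^0(\RHom(\Perfdg(X), \dercatdg{\qcoh(Y)}))$. Finally, transporting this back along the equivalence $\dercat{\qcoh(X \times Y)} \xrightarrow{\sim} H^0(\RHom(\Perfdg(X), \dercatdg{\qcoh(Y)}))$ of Theorem \ref{thm:toen_FM}, which sends $\mathcal E \mapsto F_{\mathcal E}$ and $\mathcal E' \mapsto F_{\mathcal E'}$ and, being an equivalence, reflects isomorphisms, yields $\mathcal E \cong \mathcal E'$ in $\dercat{\qcoh(X \times Y)}$.

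The step I expect to be the main obstacle is the first one: rigorously checking that $\Perf(X)$, together with the fixed enhancement $\Perfdg(X)$, genuinely fits the Verdier-quotient-of-a-$\basering k$-linear-category framework of Theorem \ref{thm:dglift_unique_quotient}. This requires both that the twists generate and that the enhancement arising from the general algebraic construction be compatible with the geometric $\Perfdg(X)$; it is precisely here that quasi-projectivity is indispensable. Once this identification is secured, the remaining translation of hypotheses and transport of isomorphisms is formal.
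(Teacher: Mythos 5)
Your proposal follows essentially the same route as the paper, which obtains the theorem by combining Theorem \ref{thm:toen_FM}, the commutative diagram of Theorem \ref{thm:dglift_fouriermukai}, and the corollary of Theorem \ref{thm:dglift_unique_quotient} for quasi-projective $X$; the translation of hypotheses and the transport of the isomorphism back along To\"en's equivalence are exactly as you describe. The one inaccuracy is in the step you yourself flag as delicate: the $\basering k$-linear category feeding into the Verdier-quotient framework is \emph{not} the full subcategory of $\Perf(X)$ on the twists with $\bbcat E(n,m) = \Hom_{\Perf(X)}(\mathcal O_X(n), \mathcal O_X(m)) = H^0(X,\mathcal O_X(m-n))$, but rather Orlov's category $\bbcat A$ with $\bbcat A(i,j) = H^0(\overline{X},\mathcal O_{\overline{X}}(j-i))$ built from the projective closure; the canonical functor $\iota\colon \bbcat A \to \Perf(X)$, $j \mapsto \mathcal O_X(j)$, is generally not fully faithful, and it is only for this $\bbcat A$ (with $L$ the $I$-torsion objects) that the identification $(\dercomp{\bbcat A}/L)^c \simeq \Perf(X)$ and the uniqueness of the enhancement needed to invoke Theorem \ref{thm:dglift_unique_quotient} are available. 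Since Theorem \ref{thm:dglift_unique_quotient} only requires the vanishing on the objects $F(\iota(A))$, i.e.\ on the $F(\mathcal O_X(n))$, this correction does not affect the rest of your argument.
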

The above result is an improvement of \cite[Theorem 1.1]{canonaco-twistedFM}, clearly only regarding the uniqueness problem and the non-twisted case: our result holds not only for smooth projective varieties, and with a hypothesis which is weaker than the one in the mentioned article, which is:
\begin{equation*}
\Hom(F(\mathcal F), F(\mathcal G)[j]) = 0,
\end{equation*}
for all $\mathcal F, \mathcal G \in \mathrm{Coh}(X)$. It is also an improvement of \cite[Remark 5.7]{canonaco-supportedFM}, which holds for fully faithful functors. 

\subsection*{Acknowledgements}

This article has been estracted from the author's PhD thesis, written under the supervision of Prof. Alberto Canonaco. In particular, the author thanks him for giving key suggestions on the part about the geometric applications.
\section{Dg-categories and quasi-functors}
This section contains the basic well-known facts about dg-categories; we refer to \cite{keller-dgcat} for a comprehensive survey. We fix, once and for all, a ground field $\basering k$. Virtually every category we shall encounter will be at least $\basering k$-linear, so we allow ourself some sloppiness, and often employ the terms ``category'' and ``functor'' meaning ``$\basering k$-category'' and ``$\basering k$-functor''. 
\subsection{Dg-categories and dg-functors}
A dg-category is a category enriched over the closed symmetric monoidal category $\comp{\basering k}$ of cochain complexes of $\basering k$-modules:
\begin{defin} \label{def:cap1_dgcat}
A \emph{differential graded (dg-) category} $\cat A$ consists of a set of objects $\Ob \cat A$, a hom-complex $\cat A(A,B)$ for any couple of objects $A,B$, and (unital, associative) composition chain maps of complexes of $\basering k$-modules:
\begin{equation*}
\begin{split}
\cat A(B,C) \otimes \cat A(A,B) & \to \cat A(A,C), \\
g \otimes f & \mapsto gf = g \circ f
\end{split}
\end{equation*}
\end{defin}
\begin{defin} \label{def:cap1_dgfun}
Let $\cat A$ and $\cat B$ be dg-categories. A \emph{dg-functor} $F$ consists of the following data:
\begin{itemize}
\item a function $F \colon \Ob \cat A \to \Ob \cat B$;
\item for any couple of objects $(A,B)$ of $\cat A$, a chain map
\begin{equation*}
F = F_{(A,B)} \colon \cat A(A,B) \to \cat B(F(A),F(B)),
\end{equation*}
\end{itemize}
subject to the usual associativity and unitality axioms.
\end{defin}
\begin{example}
An example of dg-category is given by the \emph{dg-category of complexes} $\compdg{\basering k$}: it has the same objects as $\comp{\basering k}$, and complexes of morphisms $\enrich{\Hom}(V,W)$ given by:
\begin{equation} \label{eq:internalhom_complex_def}
\begin{split}
\enrich{\Hom}(V,W)^n &= \prod_{i \in \mathbb Z} \Hom(V^i, W^{i+k}), \\
df &= d_W \circ f - (-1)^{|f|} f \circ d_V.
\end{split}
\end{equation}
\end{example}
\begin{remark}
All usual categorical constructions can be carried out for dg-categories and dg-functors. 
\begin{enumerate}
\item Any ordinary ($\basering k$-linear) category can be viewed as a dg-category, with trivial complexes of morphisms.
\item For any dg-category $\cat A$ there is the \emph{opposite dg-category} $\opp{\cat A}$, such that
\begin{equation*}
\opp{\cat A}(A,B) = \cat A(B,A), 
\end{equation*}
with the same compositions as in $\cat A$ up to a sign:
\begin{equation*}
\opp{f} \opp{g} = (-1)^{|f||g|} \opp{(gf)},
\end{equation*}
denoting by $\opp{f} \in \opp{\cat A}(B,A)$ the corresponding morphism of $f \in \cat A(A,B)$.
\item Given dg-categories $\cat A$ and $\cat B$, there is the \emph{tensor product} $\cat A \otimes \cat B$: its objects are couples $(A,B)$ where $A \in \cat A$ and $B \in \cat B$; its hom-complexes are given by
\begin{equation*}
(\cat A \otimes \cat B) ((A,B),(A',B'))= \cat A(A,A') \otimes \cat B(B,B').
\end{equation*}
Compositions of two morphisms $f \otimes g$ and $f' \otimes g'$ is given by:
\begin{equation*}
(f' \otimes g')(f \otimes g) = (-1)^{|g'||f|} f'f \otimes g'g.
\end{equation*}
The tensor product commutes with taking opposites: $\opp{(\cat A \otimes \cat B)} = \opp{\cat A} \otimes \opp{\cat B}$. Also, it is symmetric, namely, there is an isomorphism of dg-categories: $\cat A \otimes \cat B \cong \cat B \otimes \cat A$.
\item Given dg-categories $\cat A$ and $\cat B$, there is a dg-category $\Fundg(\cat A, \cat B)$ whose objects are dg-functors $\cat A \to \cat B$ and whose complexes of morphisms are the so-called \emph{dg-natural transformations}:
A dg-natural transformation $\varphi \colon F \to G$ of degree $p$ is a collection of degree $p$ morphisms
\begin{equation*}
\varphi_A \colon F(A) \to G(A),
\end{equation*}
for all $A \in \cat A$, such that for any degree $q$ morphism $f \in \cat A(A,A')$ the following diagram is commutative up to the sign $(-1)^{|p||q|}$:
\begin{equation*}
\xymatrix{
F(A) \ar[r]^{\varphi_A} \ar[d]_{F(f)}  & G(A) \ar[d]^{G(f)} \\
F(A') \ar[r]^{\varphi_{A'}} & G(A').
}
\end{equation*}
Differentials and compositions of dg-natural transformations are defined objectwise. 

There is a natural isomorphism of dg-categories:
\begin{equation} \label{eq:sec1_iso_fundg_internalhom}
\Fundg(\cat A \otimes \cat B, \cat C) \cong \Fundg(\cat A, \Fundg(\cat B, \cat C)).
\end{equation}
Dg-functors $\cat A \otimes \cat B \to \cat C$ are called \emph{dg-bifunctors}, and they are ``dg-functors of two variables $A \in \cat A$ and $B \in \cat B$'', separately dg-functorial in both.

\item Given a dg-category $\cat A$, a \emph{right $\cat A$-dg-module} is a dg-functor $\opp{\cat A} \to \compdg{\basering k}$. We set
\begin{equation*}
\compdg{\cat A} = \Fundg(\opp{\cat A}, \compdg{\basering k}).
\end{equation*}
We have a fully faithful dg-functor
\begin{equation}
\begin{split}
h = h_{\cat A} \colon \cat A & \to \compdg{\cat A}, \\
A & \mapsto \cat A(-,A),
\end{split}
\end{equation}
which is the dg version of the Yoneda embedding.

Given dg-categories $\cat A$ and $\cat B$, an \emph{$\cat A$-$\cat B$-dg-bimodule} (covariant in $\cat A$, contravariant in $\cat B$) is a right $\cat B \otimes \opp{\cat A}$-dg-module, namely, a dg-functor $\opp{\cat B} \otimes \cat A \to \compdg{\basering k}$. By convention, the contravariant variable comes first. By \eqref{eq:sec1_iso_fundg_internalhom}, such a bimodule can also be viewed as a dg-functor $\cat A \to \compdg{\cat B}$.
\end{enumerate}
\end{remark}

\subsection{The derived category}

Small dg-categories and dg-functors form a category, which is denoted by $\kat{dgCat}_{\basering{k}}$, or simply $\kat{dgCat}$ \nomenclature{$\kat{dgCat}$}{The category of (small) dg-categories} when the base ring is clear. The operations of taking cocycles and cohomology can be extended from complexes of $\basering k$-modules to dg-categories and dg-functors:
\begin{defin}
Let $\cat A$ be a dg-category. The \emph{underlying category} (resp. the \emph{homotopy category}) of $\cat A$ is the category $Z^0(\cat A)$ (resp. $H^0(\cat A)$) which is defined as follows:
\begin{itemize}
\item $\Ob Z^0(\cat A) = \Ob H^0(\cat A) = \Ob \cat A$,
\item $Z^0(\cat A)(A,B) = Z^0(\cat A(A,B))$ (respectively $H^0(\cat A)(A,B) = H^0(\cat A(A,B))$), for all $A, B \in \cat A$,
\end{itemize}
with natural compositions and identities.
\end{defin}
The mappings $\cat A \mapsto Z^0(\cat A)$ and $\cat A \mapsto H^0(\cat A)$ are functorial: given a dg-functor $F \colon \cat A \to \cat B$, there are natural induced functors 
\begin{align*}
Z^0(F) \colon Z^0(\cat A) & \to Z^0(\cat B), \\
H^0(F) \colon H^0(\cat A) & \to H^0(\cat B). 
\end{align*}
Given two objects $A,B$ in a dg-category $\cat A$, we say that they are \emph{dg-isomorphic} (resp. \emph{homotopy equivalent}), and write $A \cong B$ (resp. $A \approx B$) if they are isomorphic in $Z^0(\cat A)$ (resp. $H^0(\cat A)$).

Let $\cat A$ be a dg-category. The \emph{homotopy category of $\cat A$-modules} is defined to be $\hocomp{\cat A} = H^0(\compdg{\cat A})$. A morphism $M \to N$ in $\hocomp{\cat A}$ is a \emph{quasi-isomorphism} if $M(A) \to N(A)$ is a quasi-isomorphism of complexes for all $A \in \cat A$. The \emph{derived category} of $\cat A$ is defined to be the localisation of $\hocomp{\cat A}$ along quasi-isomorphisms:
\begin{equation*}
\dercomp{\cat A} = \hocomp{\cat A}[\mathrm{Qis}^{-1}].
\end{equation*}
The Yoneda embedding induces a fully faithful functor:
\begin{equation} \label{eq:der_Yoneda}
H^0(\cat A) \hookrightarrow \dercomp{\cat A},
\end{equation}
which is called the \emph{derived Yoneda embedding}. The category $\dercomp{\cat A}$ is triangulated; as in the case of the derived category of complexes of $\basering k$-modules, morphisms $T \to T'$ in $\dercomp{\cat A}$ are represented by ``roofs''
\begin{equation*}
T \xleftarrow{\approx} T'' \rightarrow T',
\end{equation*}
in $\hocomp{\cat A}$, where the arrow $T'' \to T$ is a quasi-isomorphism. Two $\cat A$-dg-modules $T$ and $T'$ are \emph{quasi-isomorphic} ($T \qis T'$) if they are isomorphic in $\dercomp{\cat A}$, which is equivalent to saying that there is a ``roof'' of quasi-isomorphisms:
\begin{equation}
T \xleftarrow{\approx} T'' \xrightarrow{\approx} T'.
\end{equation}

We denote by $\tria{\cat A}$ the smallest strictly full triangulated subcategory of $\dercomp{\cat A}$ which contains the image of \eqref{eq:der_Yoneda}. Moreover, we denote by $\per{\cat A}$ the idempotent completion of $\tria{\cat A}$, which coincides with the smallest strictly full triangulated subcategory of $\dercomp{\cat A}$ which contains the image of \eqref{eq:der_Yoneda} and it is thick, i.e. closed under direct summands; it can also be characterised as the subcategory of compact objects in $\dercomp{\cat A}$. The derived Yoneda embedding factors through $\tria{\cat A}$:
\begin{equation}
H^0(\cat A) \hookrightarrow \tria{\cat A} \hookrightarrow \per{\cat A}.
\end{equation}
\begin{defin} \label{def:pretr_tr_dgcat}
A dg-category $\cat A$ is \emph{pretriangulated} if $H^0(\cat A) \hookrightarrow \tria{\cat A}$ is an equivalence; it is \emph{triangulated} if $H^0(\cat A) \hookrightarrow \per{\cat A}$ is an equivalence.
\end{defin}
We remark that a pretriangulated dg-category $\cat A$ is triangulated if and only if $H^0(\cat A)$ is idempotent complete. 

Pretriangulated dg-categories are employed as higher categorical models for triangulated categories. A \emph{dg-enhancement} of a triangulated category $\cat T$ is a pretriangulated dg-category $\cat A$ such that $H^0(\cat A)$ is equivalent to $\cat T$.

\subsection{Quasi-functors}
The category $\kat{dgCat}$ carries significant homotopical structure. A \emph{quasi-equivalence} is a dg-functor $F \colon \cat A \to \cat B$ such that the maps
\begin{equation*}
F_{(A,B)} \colon \cat A(A,B) \to \cat B(F(A),F(B))
\end{equation*}
are quasi-isomorphisms, and $H^0(F)$ is essentially surjective. Given dg-categories $\cat A$ and $\cat B$, we say that they are \emph{quasi-equivalent}, writing $\cat A \qe \cat B$, \nomenclature{$\cat A \qe \cat B$}{Dg-categories $\cat A, \cat B$ are quasi-equivalent} if there exists a zig-zag of quasi-equivalences:
\begin{equation*}
\cat A \leftarrow \cat A_1 \rightarrow \ldots \leftarrow \cat A_n \rightarrow \cat B.
\end{equation*}
Model category theory allows us to understand dg-categories up to quasi-equivalence. We summarise the main results in the following statement:
\begin{thm}[\cite{tabuada-dgcat}, \cite{toen-morita}]
The category $\kat{dgCat}$ of small dg-categories has a model category structure whose weak equivalences are the quasi-equivalences; the localisation of $\kat{dgCat}$ along quasi-equivalences is denoted by $\kat{Hqe}$. Given dg-categories $\cat A$ and $\cat B$, there exists a dg-category $\RHom(\cat A, \cat B)$ (which is defined up to isomorphism in $\kat{Hqe}$ and depends only on the quasi-equivalence classes of $\cat A$ and $\cat B$) such that there is a natural isomorphism in $\kat{Hqe}$:
\begin{equation}
\RHom(\cat A \otimes \cat B, \cat C) \cong \RHom(\cat A, \RHom(\cat B, \cat C)).
\end{equation}
\end{thm}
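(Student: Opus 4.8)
The statement collects two logically independent results, and the plan is to establish them in turn: first Tabuada's theorem that $\kat{dgCat}$ carries a cofibrantly generated model structure whose weak equivalences are the quasi-equivalences, and then Toën's theorem that the resulting homotopy category $\kat{Hqe}$ is closed symmetric monoidal, from which the displayed internal-hom isomorphism follows by formal nonsense. Throughout I read the unadorned $\otimes$ in the statement as the derived tensor product $\lotimes$ on $\kat{Hqe}$ (equivalently, one tacitly cofibrantly replaces a factor), since the underived $\otimes$ does not preserve quasi-equivalences in general.

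For the model structure I would invoke Quillen's recognition theorem for cofibrantly generated model categories. The category $\kat{dgCat}$ is complete and cocomplete — indeed locally presentable, being the category of small categories enriched in $\comp{\basering k}$ — so the smallness hypotheses are automatic. I would declare the fibrations to be the dg-functors $F \colon \cat A \to \cat B$ that are degreewise surjective on hom-complexes and such that $H^0(F)$ is an isofibration (isomorphisms in $H^0(\cat B)$ lift along $H^0(F)$), and take the weak equivalences to be the quasi-equivalences. As generating cofibrations $I$ I would take the functor $\emptyset \to \ccat K$ adjoining a single object (with $\ccat K(x,x) = \basering k$) together with the functors on two-object dg-categories induced by the sphere–disk inclusions of $\comp{\basering k}$; as generating trivial cofibrations $J$, the functors built from the acyclic disk complexes $D^n$ that force degreewise surjectivity on homs, together with the inclusion $\ccat K \to \ccat P$ of the one-object dg-category into the ``free-living homotopy equivalence'' $\ccat P$, which forces the isofibration condition. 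The verification splits into two lifting computations — that $I$-injective maps are exactly the dg-functors inducing surjective quasi-isomorphisms on hom-complexes and surjective on objects (the trivial fibrations), and that $J$-injective maps are exactly the fibrations — together with the cell-complex condition. I expect the one genuine obstacle here to be showing that relative $J$-cell complexes are weak equivalences: pushouts along the disk generators are handled by an explicit contracting homotopy, but pushouts along $\ccat K \to \ccat P$, which formally glue in a homotopy equivalence, require a careful argument that the quasi-equivalence class is unchanged and that this persists under transfinite composition.

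For the monoidal part I would first check that $\otimes$ descends to a symmetric monoidal structure $\lotimes$ on $\kat{Hqe}$ with unit $\basering k$: the hom-complexes of a cofibrant dg-category are cofibrant (hence $\basering k$-flat), so $\lotimes$ is computed by cofibrant replacement and inherits associativity, unitality and symmetry from the strict $\otimes$. I would then construct the internal hom: following Toën, let $\RHom(\cat B, \cat C)$ be the dg-category of right quasi-representable cofibrant $\cat B$-$\cat C$-bimodules, i.e. the dg-enhancement of the category $\rep(\cat B, \cat C)$ of quasi-functors sitting inside $\dercomp{\opp{\cat B} \otimes \cat C}$. The crux is Toën's identification of morphism sets in the homotopy category,
\[
\Hom_{\kat{Hqe}}(\cat A, \cat B) \cong \Iso\bigl(\rep(\cat A, \cat B)\bigr),
\]
which rests on the model structure on dg-bimodules and an explicit computation of mapping spaces. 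Granting it, I would produce a bijection $\Hom_{\kat{Hqe}}(\cat A \lotimes \cat B, \cat C) \cong \Hom_{\kat{Hqe}}(\cat A, \RHom(\cat B, \cat C))$, natural in $\cat A$, by currying bimodules via the strict isomorphism \eqref{eq:sec1_iso_fundg_internalhom} and verifying that right quasi-representability is preserved under this correspondence after passing to the derived level.

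Having shown that $\lotimes$ is a closed symmetric monoidal structure on $\kat{Hqe}$ with internal hom $\RHom$, the displayed isomorphism is then the standard internal-hom identity $[\,\cat A \otimes \cat B, \cat C\,] \cong [\,\cat A, [\,\cat B, \cat C\,]\,]$ valid in any closed monoidal category, and its naturality is automatic. The main difficulty lies not in this formal last step but in the two explicit verifications flagged above: the $J$-cell argument underpinning the model structure, and Toën's computation of $\kat{Hqe}$-morphism sets as isomorphism classes of quasi-functors.
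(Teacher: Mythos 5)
The paper contains no proof of this statement at all: it is imported wholesale from the literature, with the citations (Tabuada for the model structure, Toën for $\RHom$) standing in for the argument, and is used downstream as a black box. So there is no internal proof to compare yours against; what you have written is, in effect, a reconstruction of the proofs in the two cited sources, and as such it is structurally faithful. Tabuada's model structure is indeed obtained from Quillen's recognition theorem for cofibrantly generated model categories, with fibrations exactly the dg-functors that are degreewise surjective on hom-complexes and isofibrations on $H^0$, with generating sets essentially as you list them, and with the genuinely delicate point being the one you flag: that pushouts along the generator which freely glues in a homotopy equivalence do not change the quasi-equivalence type, and that this survives transfinite composition. Likewise, Toën's $\RHom(\cat B, \cat C)$ is the dg-category of (fibrant--cofibrant) right quasi-representable bimodules, the key input is the identification $\Hom_{\kat{Hqe}}(\cat A, \cat B) \cong \Iso(\rep{\cat A, \cat B})$, the adjunction is proved by currying bimodules, and the displayed isomorphism then follows formally from closedness of the monoidal structure, exactly as you say.

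Two caveats. First, your decision to read $\otimes$ as the derived tensor product $\lotimes$ is the correct reading over a general commutative base ring, but it is unnecessary in this paper: the ground ring $\basering k$ is fixed to be a field, so every complex of $\basering k$-modules is flat, the underived tensor product already preserves quasi-equivalences, and $\otimes$ and $\lotimes$ agree on $\kat{Hqe}$ --- the unadorned $\otimes$ in the statement is literally correct here. Second, be aware that your text is a plan rather than a proof: the two steps on which everything rests --- the $J$-cell argument underpinning the model structure, and the computation of $\kat{Hqe}$-morphism sets as isomorphism classes of quasi-functors --- are identified and deferred rather than carried out, and neither is routine. Deferring them is a legitimate way to reconstruct a theorem the paper itself treats as a citation, but it means the proposal does not constitute an independent verification of the statement.
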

Objects of $\RHom(\cat A, \cat B)$ are called \emph{quasi-functors}: they are the ``homotopy relevant'' functors between dg-categories. Quasi-functors can be described concretely as particular bimodules:
\begin{prop}[{\cite[Theorem 4.5]{keller-dgcat}}] \label{prop:rep_quasifun}
The category $H^0(\RHom(\cat A, \cat B))$ is naturally equivalent to the category $\rqrep{\cat B \otimes \opp{\cat A}}$ of \emph{right quasi-representable} $\cat A$-$\cat B$-dg-bimodules, namely, the full subcategory of $\dercomp{\cat B \otimes \opp{\cat A}}$ of dg-functors $T \colon \cat A \to \compdg{\cat B}$ such that $T(A)$ is quasi-isomorphic to a $\cat B$-module of the form $\cat B(-,F(A))$, for some $F(A) \in \cat B$, for all $A \in \cat A$.
\end{prop}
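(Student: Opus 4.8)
The plan is to realise $\RHom(\cat A, \cat B)$, up to quasi-equivalence, as an explicit dg-category of bimodules, and then to obtain the statement by applying $H^0$. By the adjunction \eqref{eq:sec1_iso_fundg_internalhom}, an $\cat A$-$\cat B$-dg-bimodule is the same datum as a dg-functor $T \colon \cat A \to \compdg{\cat B}$, and the dg-category $\compdg{\cat B \otimes \opp{\cat A}}$ carries the projective model structure whose weak equivalences are the objectwise quasi-isomorphisms. Restricting to the full dg-subcategory of cofibrant (h-projective) bimodules, the functor $H^0$ recovers $\dercomp{\cat B \otimes \opp{\cat A}}$. Inside it, I would single out the full dg-subcategory $\mathrm{rep}_{\mathrm{dg}}(\cat A, \cat B)$ of those cofibrant bimodules $T$ that are right quasi-representable, i.e. $T(A) \qis \cat B(-, F(A))$ for every $A \in \cat A$.

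The argument then splits into two halves. The easy half is that, by construction, $H^0(\mathrm{rep}_{\mathrm{dg}}(\cat A, \cat B)) \simeq \rqrep{\cat B \otimes \opp{\cat A}}$: passing to $H^0$ of the cofibrant bimodules yields the derived category, and right quasi-representability is precisely a condition on its objects, stable under quasi-isomorphism. The hard half is the quasi-equivalence $\RHom(\cat A, \cat B) \qe \mathrm{rep}_{\mathrm{dg}}(\cat A, \cat B)$, which is the substance of Toën's derived Morita theory \cite{toen-morita}. I would approach it through the universal property of the internal Hom: by a Yoneda argument in $\kat{Hqe}$, it suffices to produce natural isomorphisms
\[
\RHom(\cat C, \mathrm{rep}_{\mathrm{dg}}(\cat A, \cat B)) \cong \RHom(\cat C \otimes \cat A, \cat B),
\]
matching the defining adjunction of $\RHom(\cat A, \cat B)$ recorded in the preceding theorem. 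Concretely, a right quasi-representable cofibrant bimodule is a ``dg-functor up to homotopy'' $\cat A \to \cat B$; composing with the Yoneda embedding $h_{\cat B} \colon \cat B \hookrightarrow \compdg{\cat B}$ and using a cofibrant replacement of $\cat A$ yields the comparison on objects and, with more work, on the mapping complexes.

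The main obstacle is precisely this last identification at the dg-level, rather than merely at the level of $H^0$ or of isomorphism classes of objects: one must check that the comparison is a quasi-isomorphism on the full Hom-complexes, and not only a bijection on homotopy classes of morphisms. This is where cofibrancy (h-projectivity) is essential, guaranteeing that the bimodule Hom-complexes compute the derived mapping complexes, and where one must keep careful track of the closed monoidal structure underlying $\otimes$ and $\RHom$. Granting this quasi-equivalence, applying $H^0$ and invoking the easy half gives the asserted natural equivalence $H^0(\RHom(\cat A, \cat B)) \simeq \rqrep{\cat B \otimes \opp{\cat A}}$.
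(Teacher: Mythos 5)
The paper gives no proof of this proposition: it is quoted from Keller's survey (Theorem 4.5 there), whose proof is in turn Toën's derived Morita theory, so there is no in-paper argument to compare against. Your sketch is a correct reconstruction of that standard proof --- identify $\RHom(\cat A, \cat B)$ in $\Hqe$ with the dg-category of h-projective right quasi-representable bimodules and then apply $H^0$ --- and it splits the work exactly where the literature does: the passage from the dg-level identification to the $H^0$-level statement is formal, and all of the content sits in the quasi-equivalence $\RHom(\cat A,\cat B) \qe \mathrm{rep}_{\mathrm{dg}}(\cat A, \cat B)$. The one point to flag is that the Yoneda argument you propose does not genuinely reduce that step: producing the natural bijection $\Hom_{\Hqe}(\cat C, \mathrm{rep}_{\mathrm{dg}}(\cat A,\cat B)) \cong \Hom_{\Hqe}(\cat C \otimes \cat A, \cat B)$ (the set-level version is what the Yoneda lemma in $\Hqe$ actually requires) is precisely the main theorem of \cite{toen-morita}, proved by an explicit computation of mapping spaces in the model category $\kat{dgCat}$, and none of that computation is visible in your outline. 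Since the proposition is cited rather than proved in the paper, deferring to Toën at that point is entirely legitimate --- just be aware that the portions you have written out are the formal bookends of the argument rather than its substance.
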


\subsection{Pretriangulated hulls} \label{subsec:pretr_hull}
Let $\cat A$ be a dg-category. A dg-module $X \in \compdg{\cat A}$ is \emph{acyclic} if $X(A)$ is an acyclic complex for all $A \in \cat A$. A dg-module $M \in \compdg{\cat A}$ is \emph{h-projective} if $\hocomp{\cat A}(M,X) \cong 0$ for any acyclic $\cat A$-dg-module $X$. H-projective dg-modules serve as an enhancement of the derived category:
\begin{prop}
The full dg-subcategory $\hproj{\cat A} \subset \compdg{\cat A}$ of h-projective $\cat A$-dg-modules is an enhancement of $\dercomp{\cat A}$. Namely, the functor
\begin{equation*}
H^0(\hproj{\cat A}) \subset \hocomp{\cat A} \to \dercomp{\cat A}
\end{equation*}
is an equivalence. 
\end{prop}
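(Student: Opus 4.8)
The plan is to establish the two defining properties of an enhancement in turn: first that the comparison functor $H^0(\hproj{\cat A}) \to \dercomp{\cat A}$ is an equivalence of categories, and then that $\hproj{\cat A}$ is genuinely pretriangulated. The equivalence splits into full faithfulness and essential surjectivity, and the entire formal part rests on one elementary observation about h-projective modules, which I would isolate as a lemma.

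The key lemma is the following: if $M$ is h-projective, then the functor $\hocomp{\cat A}(M, -)$ sends quasi-isomorphisms to isomorphisms. To prove it, let $s \colon N \to N'$ be a quasi-isomorphism and complete it to a triangle $N \to N' \to \cone(s) \to N[1]$ in $\hocomp{\cat A}$, noting that $\cone(s)$ is acyclic. Since $\hocomp{\cat A}(M, -)$ is cohomological and $\hocomp{\cat A}(M, \cone(s)[n]) \cong 0$ for every $n$ by the very definition of h-projectivity, the resulting long exact sequence forces $s_* \colon \hocomp{\cat A}(M, N) \to \hocomp{\cat A}(M, N')$ to be an isomorphism.

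Full faithfulness then follows from the calculus-of-fractions presentation of $\dercomp{\cat A} = \hocomp{\cat A}[\mathrm{Qis}^{-1}]$. Any morphism $M \to N$ between h-projective modules in $\dercomp{\cat A}$ is represented by a right roof $M \xrightarrow{f} N' \xleftarrow{s} N$ with $s$ a quasi-isomorphism; by the lemma $s_*$ is bijective, so $f$ factors uniquely through $N$, which yields fullness, and the same bijectivity shows that a map $g \colon M \to N$ that dies in $\dercomp{\cat A}$ already vanishes in $\hocomp{\cat A}$, which yields faithfulness. For essential surjectivity I would invoke the existence of h-projective resolutions: every $\cat A$-dg-module $N$ admits a quasi-isomorphism $P \to N$ with $P$ h-projective. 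The building blocks are the representables $\cat A(-, A)$, which are h-projective because $\hocomp{\cat A}(\cat A(-,A), X) \cong H^0(X(A)) \cong 0$ for acyclic $X$ by the dg-Yoneda lemma; one then assembles a semifree module as a transfinite union of iterated cones and coproducts of shifted representables mapping to $N$. This resolution step is the main obstacle, as it is the only place requiring a genuine construction (a colimit, or small-object-argument, process), whereas everything else is formal.

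Finally, to promote the equivalence to an enhancement I would verify that $\hproj{\cat A}$ is pretriangulated. The dg-category $\compdg{\cat A}$ carries functorial dg-shifts and dg-cones, making it pretriangulated, and the full dg-subcategory $\hproj{\cat A}$ is closed under these operations: a shift of an h-projective module is h-projective, and the cone of a morphism of h-projective modules is h-projective, both immediate from the defining triangle together with the vanishing against acyclics. Hence $\hproj{\cat A}$ is itself pretriangulated, and combined with the equivalence $H^0(\hproj{\cat A}) \simeq \dercomp{\cat A}$ this exhibits it as an enhancement of $\dercomp{\cat A}$.
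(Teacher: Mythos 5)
Your proof is correct, but note that the paper itself offers no proof of this proposition: it is stated as standard background (it goes back to Keller's work on deriving dg categories), so there is nothing to compare against line by line. Your argument is exactly the standard one: the key lemma that $\hocomp{\cat A}(M,-)$ inverts quasi-isomorphisms for $M$ h-projective (via the acyclic cone and the long exact sequence), full faithfulness via the calculus of fractions, essential surjectivity via semifree resolutions built from the representables $\cat A(-,A)$ (which are h-projective by dg-Yoneda, since $\hocomp{\cat A}(\cat A(-,A),X)\cong H^0(X(A))$), and closure of $\hproj{\cat A}$ under shifts and cones to get pretriangulatedness. The only step you rightly flag as non-formal is the existence of h-projective resolutions; a complete write-up would either carry out the transfinite/semifree construction or cite it, but as a proof sketch everything you say is accurate and in the right order.
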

Taking suitable dg-subcategories of $\hproj{\cat A}$, we obtain enhancements of $\tria{\cat A}$ and $\per{\cat A}$, respectively $\pretr{\cat A}$ and $\perdg{\cat A}$. For instance, $\perdg{\cat A}$ is by definition the full dg-subcategory of $\hproj{\cat A}$ whose objects are the same as $\per{\cat A}$. The dg-Yoneda embedding factors through $\pretr{\cat A}$:
\begin{equation}
\cat A \hookrightarrow \pretr{\cat A} \subset \perdg{\cat A}.
\end{equation}

The dg-category $\perdg{\cat A}$ is called the \emph{triangulated hull} of $\cat A$. It satisfies the following ``homotopy universal property'':
\begin{prop}[{\cite[Theorem 7.2]{toen-morita}, \cite[Corollary 4.2]{canonaco-stellari-internalhoms}}] \label{prop:triadgcat_uproperty}
Let $\cat A, \cat B$ be dg-categories, and assume that $\cat B$ is triangulated. Then $\RHom(\cat A, \cat B)$ is triangulated. Moreover, there is a natural quasi-equivalence:
\begin{equation} \label{eq:perdg_univproperty}
\RHom(\perdg{\cat A}, \cat B) \xrightarrow{\sim} \RHom(\cat A, \cat B),
\end{equation}
induced by the Yoneda embedding $\cat A \hookrightarrow \perdg{\cat A}$.
\end{prop}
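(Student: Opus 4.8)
The plan is to work throughout with the description of quasi-functors as right quasi-representable bimodules (Proposition~\ref{prop:rep_quasifun}), identifying $H^0(\RHom(\cat A,\cat B))$ with $\rqrep{\cat B\otimes\opp{\cat A}}\subseteq\dercomp{\cat B\otimes\opp{\cat A}}$, the bimodules $T$ with $T(A)\qis\cat B(-,F(A))$ for all $A$; morphism complexes between quasi-functors are then the derived bimodule homs. To prove that $\RHom(\cat A,\cat B)$ is triangulated when $\cat B$ is, I would use that a full dg-subcategory of a triangulated dg-category closed under shifts, cones and homotopy direct summands is again triangulated, and verify these closures for the quasi-representables. All three operations are computed pointwise in $\cat B$ — $(T[1])(A)\simeq T(A)[1]$, $\cone(\varphi)(A)\simeq\cone(\varphi_A)$, and a summand of $T$ is pointwise a summand of $T(A)$ — and since $\cat B$ is triangulated, $H^0(\cat B)=\per{\cat B}$ is closed under these operations inside $\dercomp{\cat B}$ (Definition~\ref{def:pretr_tr_dgcat}); hence each output stays representable, so the quasi-representables form a thick triangulated subcategory.

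The asserted comparison is the restriction $y^*\colon\RHom(\perdg{\cat A},\cat B)\to\RHom(\cat A,\cat B)$ induced by precomposition with the Yoneda embedding $y\colon\cat A\hookrightarrow\perdg{\cat A}$; on bimodules it is restriction along $y$, which admits a derived left adjoint $\lder y_!$ (derived left Kan extension). Since $y$ is quasi-fully-faithful, the unit $\mathrm{id}\to y^*\lder y_!$ is an isomorphism, so $\lder y_!$ is fully faithful. The crux is that both functors preserve quasi-representability. For $y^*$ this is immediate. For $\lder y_!$, a quasi-representable bimodule is a quasi-functor $F\colon\cat A\to\cat B$, and $\lder y_!$ is its universal extension to $\perdg{\cat A}$; as every object of $\perdg{\cat A}$ is built from objects of $\cat A$ by iterated shifts, cones and summands, and $\lder y_!$ carries these formal operations to the corresponding ones in $\dercomp{\cat B}$, the values of the extension lie in the thick triangulated subcategory generated by the $F(A)$, which equals $\per{\cat B}=H^0(\cat B)$ because $\cat B$ is triangulated; hence the extension is again quasi-representable.

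Granting this, $\lder y_!$ and $y^*$ restrict to an adjunction between $\rqrep{\cat B\otimes\opp{\cat A}}$ and $\rqrep{\cat B\otimes\opp{\perdg{\cat A}}}$, the unit being already an isomorphism. For the counit $\lder y_! y^* T\to T$ on a quasi-representable $T$ over $\perdg{\cat A}$, it is a pointwise isomorphism on the image of $\cat A$; since the locus in $\perdg{\cat A}$ where a comparison of two quasi-functors into the triangulated $\cat B$ is an isomorphism is thick and triangulated and contains $\cat A$, it is all of $\perdg{\cat A}$, so the counit is an isomorphism and $y^*$ is an equivalence on $H^0$. For the morphism complexes, the adjunction together with the counit isomorphism gives, for quasi-representable $S,T$ over $\perdg{\cat A}$, the identifications $\mathrm{RHom}_{\cat A}(y^*S,y^*T)\cong\mathrm{RHom}_{\perdg{\cat A}}(\lder y_! y^* S,T)\cong\mathrm{RHom}_{\perdg{\cat A}}(S,T)$, an isomorphism induced precisely by $y^*$; hence $y^*$ induces quasi-isomorphisms on morphism complexes and is the desired quasi-equivalence.

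The main obstacle is the preservation of quasi-representability under $\lder y_!$, i.e. the essential-surjectivity direction: one must promote a quasi-functor out of $\cat A$ to a genuine quasi-functor out of $\perdg{\cat A}$ that still lands among the representables, and the sole mechanism forcing this is the triangulatedness of $\cat B$, through the thickness of $H^0(\cat B)$ in $\dercomp{\cat B}$. Pinning down rigorously that $\lder y_!$ sends the formal shifts, cones and summands of $\perdg{\cat A}$ to the actual triangulated operations in $\dercomp{\cat B}$ is the technical heart; the surrounding adjunction bookkeeping and the triangulatedness check are comparatively routine.
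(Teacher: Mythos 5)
The paper offers no proof of this proposition: it is imported verbatim from To\"en and Canonaco--Stellari, so there is nothing in the text to compare your argument against. That said, your sketch follows essentially the standard route of those references and is sound in outline: model $\RHom$ by right quasi-representable bimodules, get triangulatedness from the fact that shifts, cones and summands are computed objectwise and that the essential image of $H^0(\cat B)$ in $\dercomp{\cat B}$ is thick (this is exactly where $\per{\cat B}=H^0(\cat B)$ enters), and get the comparison from the adjunction between restriction along the Yoneda embedding and derived induction. Two points deserve more care than you give them. First, Proposition \ref{prop:rep_quasifun} as stated only identifies $H^0(\RHom(\cat A,\cat B))$ with $\rqrep{\cat B\otimes\opp{\cat A}}$; to conclude that the dg-category $\RHom(\cat A,\cat B)$ is triangulated and that your hom-complex computations are legitimate, you need the dg-level model (the full dg-subcategory of h-projective right quasi-representable bimodules), which is standard but should be invoked explicitly. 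Second, the ``technical heart'' you flag is resolved by one concrete computation: for a bimodule $T$ over $\cat A$ and $M\in\perdg{\cat A}$, dg-Yoneda gives $\perdg{\cat A}(yA,M)\cong M(A)$, whence $(\lder y_!T)(-,M)\cong T\lotimes_{\cat A}M$; this is an exact, coproduct-preserving functor of $M\in\dercomp{\cat A}$ sending $\cat A(-,A)$ to $T(-,A)\qis\cat B(-,F(A))$, so it carries $\per{\cat A}$ into the thick closure of the $\cat B(-,F(A))$, which lies in the essential image of $H^0(\cat B)$ because $\cat B$ is triangulated. Writing that one line down turns your ``carries the formal operations to the actual ones'' into a proof, and the same exactness-in-$M$ observation is what justifies the ``thick locus'' argument for the counit. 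With those two points made explicit, the argument is complete.
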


\section{The dg-lift uniqueness problem}

Any quasi-functor $T \colon \cat A \to \cat B$ yields an ordinary functor $H^0(T) \colon H^0(\cat A) \to H^0(\cat B)$. More precisely, there is a functor: 
\begin{equation} \label{eq:H0functor_RHom}
\HOdir{\cat A}{\cat B} \colon H^0(\RHom(\cat A, \cat B)) \to \Fun(H^0(\cat A), H^0(\cat B)).
\end{equation}
When we identify $H^0(\RHom(\cat A, \cat B)$ with $\rqrep{\cat B \otimes \opp{\cat A}}$, $\HOdir{\cat A}{\cat B}$ is precisely the functor which maps a bimodule $T$ to its (objectwise) zeroth cohomology $H^0(T)$, which is a $H^0(\cat A)$-$H^0(\cat B)$-bimodule, namely, a functor
\begin{equation*}
H^0(T) \colon \opp{H^0(\cat B)} \otimes H^0(\cat A) \to \Mod{\basering k},
\end{equation*}
or equivalently a functor
\begin{equation*}
H^0(T) \colon H^0(\cat A) \to \Fun(\opp{H^0(\cat B)}, \Mod{\basering k}),
\end{equation*}
where $\Mod{\basering k}$ is the category of $\basering k$-modules. Since $T$ is right quasi-representable, then $H^0(T)$ is right representable, namely
\begin{equation*}
H^0(T)(A) \cong H^0(\cat B)(-,F(A)),
\end{equation*}
for some $F(A) \in \cat B$, for all $A \in \cat A$. Hence, $H^0(T)$ yields a functor $H^0(\cat A) \to H^0(\cat B)$.

If $\cat A$ and $\cat B$ are pretriangulated, then $\HOdir{\cat A}{\cat B}$ takes values in the category of exact functors $\Funex(H^0(\cat A), H^0(\cat B))$. The \emph{dg-lift uniqueness problem} amounts to understanding in which cases $\HOdir{\cat A}{\cat B}$ is essentially injective, that is: given quasi-functors $T_1, T_2$ such that $H^0(T_1) \cong H^0(T_2)$, is it true that $T_1 \cong T_2$ in $H^0(\RHom(\cat A, \cat B))$? In many situations, we will be studying dg-functors whose domain dg-category $\cat A$ is (pre)triangulated and generated by a simpler dg-category, namely, $\cat A \qe \perdg{\cat C}$. In this case, the dg-lift uniquness problem can be reduced to generators:
\begin{lemma} \label{lemma:dglift_reduction_generators}
Let $\cat A$ and $\cat B$ be triangulated dg-categories, and assume that $\cat A \qe \perdg{\cat C}$ for some dg-category $\cat C$. Then, $\HOdir{\cat A}{\cat B}$ is essentially injective if $\HOdir{\cat C}{\cat B}$ is such.
\end{lemma}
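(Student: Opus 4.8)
The plan is to exploit the homotopy universal property of the triangulated hull (Proposition \ref{prop:triadgcat_uproperty}) to pass freely between quasi-functors out of $\cat C$ and quasi-functors out of $\perdg{\cat C}$, and then transport everything along the quasi-equivalence $\cat A \qe \perdg{\cat C}$. First I would note that since $\cat B$ is triangulated, Proposition \ref{prop:triadgcat_uproperty} gives a quasi-equivalence $\RHom(\perdg{\cat C}, \cat B) \xrightarrow{\sim} \RHom(\cat C, \cat B)$ induced by restriction along the Yoneda embedding $\cat C \hookrightarrow \perdg{\cat C}$. Because a quasi-equivalence induces an equivalence on $H^0$, the restriction functor
\begin{equation*}
(-)_{|\cat C} \colon H^0(\RHom(\perdg{\cat C}, \cat B)) \to H^0(\RHom(\cat C, \cat B))
\end{equation*}
is an equivalence of categories; in particular it reflects isomorphisms. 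Transporting along $\cat A \qe \perdg{\cat C}$, which likewise induces an equivalence $H^0(\RHom(\cat A, \cat B)) \simeq H^0(\RHom(\perdg{\cat C}, \cat B))$, I obtain that restriction from $\cat A$ to $\cat C$ is (up to these equivalences) essentially injective as well.

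The key compatibility I need is that restriction commutes with the $H^0$-functors, i.e. that the diagram relating $\HOdir{\cat A}{\cat B}$ and $\HOdir{\cat C}{\cat B}$ via the restriction functors on both sides commutes up to natural isomorphism. Concretely, for a quasi-functor $T \colon \cat A \to \cat B$ the ordinary functor $H^0(T_{|\cat C})$ should coincide with the restriction of $H^0(T)$ along $H^0(\cat C) \to H^0(\cat A)$. This follows from the fact that the quasi-equivalence of Proposition \ref{prop:triadgcat_uproperty} is induced by an honest dg-functor (the Yoneda embedding), so applying $H^0$ to the composite of $T$ with this embedding gives precisely the restriction of $H^0(T)$; naturality is then automatic.

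Granting this, the argument is a formal diagram chase. Suppose $T_1, T_2 \colon \cat A \to \cat B$ are quasi-functors with $H^0(T_1) \cong H^0(T_2)$ in $\Funex(H^0(\cat A), H^0(\cat B))$. Restricting this isomorphism along $H^0(\cat C) \to H^0(\cat A)$ and using the compatibility above yields $H^0((T_1)_{|\cat C}) \cong H^0((T_2)_{|\cat C})$. Since $\HOdir{\cat C}{\cat B}$ is essentially injective by hypothesis, this forces $(T_1)_{|\cat C} \cong (T_2)_{|\cat C}$ in $H^0(\RHom(\cat C, \cat B))$. Finally, because restriction $H^0(\RHom(\cat A, \cat B)) \to H^0(\RHom(\cat C, \cat B))$ is an equivalence (hence conservative), we conclude $T_1 \cong T_2$, as desired.

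The main obstacle I anticipate is not any of the individual steps, each of which is formal, but rather verifying the compatibility square of the second paragraph with enough care: one must be sure that the equivalence $H^0(\RHom(\cat A, \cat B)) \simeq H^0(\RHom(\cat C, \cat B))$ is genuinely \emph{given by} restriction of the underlying functors on homotopy categories, and not merely an abstract equivalence that happens to exist. The cleanest way to pin this down is to work with the bimodule description of quasi-functors (Proposition \ref{prop:rep_quasifun}) and check that restricting a right quasi-representable bimodule along $\cat C \hookrightarrow \perdg{\cat C}$ matches, under the $H^0$-functor, the restriction of the represented functor — a statement that is true essentially by construction of the Yoneda embedding, but which deserves an explicit remark.
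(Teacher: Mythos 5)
Your argument is correct and follows essentially the same route as the paper: identify $\cat A$ with $\perdg{\cat C}$, invoke Proposition \ref{prop:triadgcat_uproperty} to see that restriction along the Yoneda embedding induces an equivalence $H^0(\RHom(\perdg{\cat C},\cat B)) \xrightarrow{\sim} H^0(\RHom(\cat C,\cat B))$, observe that this square commutes with the $\HOdir{\cat A}{\cat B}$ and $\HOdir{\cat C}{\cat B}$ functors, and conclude by a diagram chase. The paper compresses your third and fourth paragraphs into the phrase ``the claim now follows from a direct argument,'' so your write-up simply makes explicit what the paper leaves to the reader.
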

\begin{proof}
Without loss of generality, we may identify $\cat A = \perdg{\cat C}$. There is a commutative diagram:
\begin{equation*}
\xymatrix{
H^0(\RHom(\perdg{\cat C}, \cat B)) \ar[d]^\sim  \ar[r]^{\HOdir{\cat A}{\cat B}} & \Funex(H^0(\perdg{\cat C}), H^0(\cat B)) \ar[d] \\
H^0(\RHom(\cat C, \cat B)) \ar[r]^{\HOdir{\cat C}{\cat B}} & \Fun(H^0(\cat C), H^0(\cat B)),
}
\end{equation*}
where the left vertical arrow is induced by the Yoneda embedding $\cat C \hookrightarrow \perdg{\cat C}$, and the right vertical arrow is induced by its zeroth cohomology: $H^0(\cat C) \hookrightarrow H^0(\perdg{\cat C})$. By Proposition \ref{prop:triadgcat_uproperty}, the left vertical arrow is an isomorphism; the claim now follows from a direct argument.
\end{proof}
We mention another relevant property of $\HOdir{\cat A}{\cat B}$:
\begin{prop} \label{prop:H0_conservative}
The functor \eqref{eq:H0functor_RHom} reflects isomorphisms.
\end{prop}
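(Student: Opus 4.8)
The plan is to work through the bimodule description of quasi-functors provided by Proposition \ref{prop:rep_quasifun}, and to reduce the statement to the fact that the derived Yoneda embedding is fully faithful, hence reflects isomorphisms. First I would fix a morphism $\alpha \colon T_1 \to T_2$ in $H^0(\RHom(\cat A, \cat B))$ such that $H^0(\alpha)$ is an isomorphism of functors, and identify $H^0(\RHom(\cat A, \cat B))$ with the full subcategory $\rqrep{\cat B \otimes \opp{\cat A}}$ of $\dercomp{\cat B \otimes \opp{\cat A}}$. Under this identification $\alpha$ becomes a morphism of bimodules in the derived category, and the goal becomes to prove that $\alpha$ is a quasi-isomorphism.

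Since a morphism of $(\cat B \otimes \opp{\cat A})$-dg-modules is an isomorphism in $\dercomp{\cat B \otimes \opp{\cat A}}$ precisely when, for each $A \in \cat A$, its restriction $\alpha_A \colon T_1(A) \to T_2(A)$ is an isomorphism in $\dercomp{\cat B}$, the problem reduces to checking this objectwise condition. Next I would exploit right quasi-representability: for each $A$ there are objects $F_i(A) \in \cat B$ with $T_i(A) \qis \cat B(-, F_i(A))$, so that $\alpha_A$ is a morphism in $\dercomp{\cat B}$ between objects lying in the essential image of the derived Yoneda embedding $H^0(\cat B) \hookrightarrow \dercomp{\cat B}$ of \eqref{eq:der_Yoneda}. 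Because this embedding is fully faithful, $\alpha_A$ corresponds to a unique morphism $\overline{\alpha}_A \colon F_1(A) \to F_2(A)$ in $H^0(\cat B)$; and a fully faithful functor reflects isomorphisms, so $\alpha_A$ is an isomorphism if and only if $\overline{\alpha}_A$ is.

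The key step — and the one I expect to require the most care — is to identify $\overline{\alpha}_A$ with the component of $H^0(\alpha)$ at $A$. Unwinding the definition of $\HOdir{\cat A}{\cat B}$, the functor $H^0(T_i)$ sends $A$ to the object of $H^0(\cat B)$ representing $H^0(T_i(A)) \cong H^0(\cat B)(-, F_i(A))$, and $H^0(\alpha)(A)$ is the induced map on representing objects, which matches $\overline{\alpha}_A$ through ordinary Yoneda and the passage from $\cat B(-, F_i(A))$ to its cohomology. This bookkeeping — reconciling the derived Yoneda full faithfulness with the objectwise $H^0$ — is the only genuinely delicate point; the rest is formal.

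Once this compatibility is established, the hypothesis that $H^0(\alpha)$ is a natural isomorphism forces each $\overline{\alpha}_A$, and hence each $\alpha_A$, to be an isomorphism; therefore $\alpha$ is a quasi-isomorphism of bimodules, i.e. an isomorphism in $H^0(\RHom(\cat A, \cat B))$, which is exactly the assertion that \eqref{eq:H0functor_RHom} reflects isomorphisms.
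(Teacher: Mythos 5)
Your proposal is correct and follows essentially the same route as the paper: reduce to checking that $\alpha_A$ is an isomorphism in $\dercomp{\cat B}$ for each $A$, transfer to the quasi-representing objects $\cat B(-,F_i(A))$, and use the derived Yoneda embedding together with ordinary Yoneda in $H^0(\cat B)$ to match this with the components of $H^0(\alpha)$. The "delicate bookkeeping" you flag is exactly what the paper handles with its two commutative squares relating $\varphi_A$, $\varphi'_A$ and their images under $H^0$.
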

\begin{proof}
A morphism $T \to T'$ in $H^0(\RHom(\cat A, \cat B)) = \rqrep{\cat A, \cat B}$ is given by a roof
\begin{equation*}
T \xleftarrow{\approx} T'' \rightarrow T'
\end{equation*}
in $\hocomp{\cat B \otimes \opp{\cat A}}$, where the arrow $T'' \to T$ is a quasi-isomorphism. So, it sufficient to prove that any morphism of $\cat A$-$\cat B$-bimodules $\varphi : T \to T'$ is a quasi-isomorphism if $H^0(\varphi) : H^0(T) \to H^0(T')$ is an isomorphism. Now, $\varphi$ is a quasi-isomorphism if and only if $\varphi_A : T(A) \to T'(A)$ is an isomorphism in $\dercomp{\cat B}$ for all $A \in \cat A$, which is equivalent to requiring that $\varphi'_A \colon \cat B(-,F(A)) \to \cat B(-,F'(A))$ is an isomorphism in $\dercomp{\cat B}$, where $\varphi'_A$ is the unique morphism in $\dercomp{\cat B}$ such that the following diagram is commutative in $\dercomp{\cat B}$:
\begin{equation*}
\xymatrix{
T(A) \ar[r]^{\varphi_A} \ar[d]^\approx & T'(A) \ar[d]^\approx \\
\cat B(-,F(A)) \ar[r]^{\varphi'_A} & \cat B(-, F'(A)).
}
\end{equation*}
Now, by the derived Yoneda embedding of $\cat B$, $\varphi'_A$ is a quasi-isomorphism if and only if $\varphi'_A(1_{F(A)}) \in \cat B(F(A), F'(A))$ is a homotopy equivalence. This means that $[\varphi'_A(1_{F(A)})] = H^0(\varphi'_A)([1_{F(A)}])$ is an isomorphism in $H^0(\cat B)$, so by the Yoneda embedding of the (ordinary) category $H^0(\cat B)$ this is equivalent to requiring that
\begin{align*}
H^0(\varphi'_A) \colon H^0(\cat B)(-, F(A)) \to H^0(\cat B)(-,F'(A))
\end{align*}
is an isomorphism in $\Fun(H^0(\cat B), \Mod{\basering k})$. Taking $H^0$, the above commutative diagram becomes:
\begin{equation*}
\xymatrix{
H^0(T)(A) \ar[r]^{H^0(\varphi)_A} \ar[d]^\sim & H^0(T')(A) \ar[d]^\sim \\
\cat H^0(B)(-,F(A)) \ar[r]^{H^0(\varphi'_A)} & \cat H^0(B)(-, F'(A)).
}
\end{equation*}
By hypothesis, $H^0(\varphi)_A$ is an isomorphism, so we deduce that $H^0(\varphi'_A)$ is an isomorphism for all $A \in \cat A$; by the above discussion, this implies that $\varphi_A \colon T(A) \to T'(A)$ is a quasi-isomorphism for all $A$, and we are done.
\end{proof}

\section{Dg-lifts and $A_\infty$-functors} \label{chapter:Ainfinity}
\emph{$A_\infty$-categories} and $A_\infty$-functors are, respectively, a homotopy coherent incarnation of dg-categories and dg-functors. $A_\infty$-functors are actually a model for quasi-functors; their advantage over quasi-representable bimodules relies in their ``concreteness'': they are defined by elementary (even if quite complicated) formulae, which can be employed in rather direct arguments. This formalism will allow us to prove a dg-lift uniqueness result under some hypothesis on the functors involved

\subsection{$A_\infty$-categories and functors}
The basic notions of the theory of $A_\infty$-categories and functors are taken directly from \cite{seidel-fukaya}, whose conventions will be followed. We warn the reader especially about sign conventions, which are possibly the most annoying feature of the theory. If it feels more comfortable, just assume that $\car \basering k = 2$, at least at a first reading.

We will be working with \emph{strictly unital} $A_\infty$-categories and functors. The formal definitions are as follows:
\begin{defin}
A \emph{strictly unital} $A_\infty$-category $\cat A$ consists of a set of objects $\Ob \cat A$, a graded $\basering k$-vector space $\cat A(X_0,X_1)$ for any couple of objects $X_0,X_1 \in \cat A$, and multilinear composition maps for any order $d \geq 1$:
\begin{equation}
\mu^d_{\cat A} \colon \cat A(X_{d-1}, X_d) \otimes \ldots \otimes \cat A(X_0, X_1) \to \cat A(X_0, X_d)[2-d],
\end{equation}
satisfying the following collection of equations (for all $d \geq 1$):
\begin{equation}
\sum_{m=1}^d \sum_{n=0}^{d-m} (-1)^{\maltese_n} \mu_{\cat A}^{d-m+1}(f_d,\ldots,f_{n+m+1}, \mu^m_{\cat A}(f_{n+m},\ldots,f_{n+1}),f_n,\ldots,f_1) = 0,
\end{equation}
where by definition $\maltese_n = |f_1| + \ldots + |f_n| - n$. Moreover, for any object $X \in \cat A$, there exists a (necessarily unique) morphism $1_X \in \cat A(X,X)^0$ which satisfies:
\begin{equation}
\begin{split}
& \mu^1_{\cat A}(1_X) = 0, \\
& (-1)^{|f|} \mu^2_{\cat A}(1_{X_1}, f) = \mu^2_{\cat A}(f, 1_{X_0}) = f, \quad \forall\,f \in \cat A(X_0,X_1), \\
& \mu^d_{\cat A}(f_{d-1}, \ldots,f_{n+1},1_{X_n},f_n,\ldots,f_1) = 0, \\
& \forall\, d>2, f_k \in \cat A(X_{k-1},X_k), \forall\, 0 \leq n < d. 
\end{split}
\end{equation}
\end{defin}
Unwinding the above definition, we find out that that the map $\mu^1_{\cat A}$ is a coboundary which endows the hom-spaces $\cat A(X, Y)$ with a structure of chain complex. The composition $\mu^2_{\cat A}$ is not associative, but its deviation from being so is measured by the higher order maps $\mu^d_{\cat A}$.
\begin{defin}
Let $\cat A$ and $\cat B$ be (strictly unital) $A_\infty$-categories. An $A_\infty$-functor $F \colon \cat A \to \cat B$ consists of a map of sets
\begin{align*}
F^0 \colon \Ob \cat A & \to \Ob \cat B, \\
X & \mapsto F^0(X) = F(X),
\end{align*}
and multilinear maps
\begin{equation}
F^d \colon \cat A(X_{d-1}, X_d) \otimes \ldots \otimes \cat A(X_0, X_1) \to \cat B(F(X_0),F(X_d))[1-d],
\end{equation}
subject to the following equations, for all $d \geq 1$:
\begin{equation} \label{eq:Ainf_equation}
\begin{split}
\sum_{r \geq 1} & \sum_{s_1 + \ldots + s_r = d} \mu^r_{\cat B}(F^{s_r}(f_d,\ldots,f_{d-s_r+1}),\ldots, F^{s_1}(f_{s_1},\ldots,f_1)) \\
&= \sum_{m=1}^d \sum_{n=0}^{d-m} (-1)^{\maltese_n} F^{d-m+1}(f_d,\ldots,f_{n+m+1},\mu^m_{\cat A}(f_{n+m},\ldots,f_{n+1}),f_n,\ldots,f_1),
\end{split}
\end{equation}
where $s_i \geq 1$ for all $i$. Moreover, $F$ is required to satisfy the following strict unitality condition:
\begin{equation} \label{eq:Ainf_functor_strictunit}
\begin{split}
& F^1(1_X) = 1_{F(X)}, \quad \forall\, X \in \cat A, \\
& F^d(f_{d-1},\ldots,f_{n+1},1_{X_n},f_n,\ldots, f_1) = 0, \\
& \forall\, d \geq 2, f_k \in \cat A(X_{k-1},X_k), \forall\, 0 \leq n < d. 
\end{split}
\end{equation}

Given $A_\infty$-functors $F \colon \cat A \to \cat B$ and $G \colon \cat B \to \cat C$, their \emph{composition} $G \circ F$ is defined as follows:
\begin{equation} \label{eq:Ainf_composition_functors}
\begin{split}
(G \circ F)^0  &= G^0 \circ F^0,  \\
(G \circ F)^d &(f_d,\ldots,f_1)  \\
 &= \sum_{r \geq 1} \sum_{s_1 + \ldots + s_r = d}  G^r(F^{s_r}(f_d,\ldots,f_{d-s_r+1}), \ldots, F^{s_1}(f_{s_1}, \ldots, f_1)),
\end{split}
\end{equation}
whenever $d \geq 1$, with $s_i \geq 1$.
\end{defin}
\begin{remark} \label{remark:A_infty_funct-reduced}
Any dg-category $\cat A$ can be viewed as an $A_\infty$-category, setting
\begin{align*}
\mu^1_{\cat A}(f) &= (-1)^{|f|} df, \\
\mu^2_{\cat A}(g,f) &= (-1)^{|f|} gf, \\
\mu^d_{\cat A} &= 0, \quad \forall\, d > 2.
\end{align*}
As we see, apart from sign twists, a dg-category is an $A_\infty$-category whose higher compositions (for $d>2$) vanish. From now on, unless otherwise specified, any dg-category will be implicitly viewed in this way as an $A_\infty$-category.

It is interesting to see how the definition of $A_\infty$-functor behaves if the domain and codomain are assumed to be dg-categories. If $F \colon \cat A \to \cat B$ is an $A_\infty$-functor between dg-categories, the degree $d$ equation \eqref{eq:Ainf_equation} boils down to:
\begin{equation} \label{eq:A_infty_funct-dg}
\begin{split}
\mu^1_{\cat B}(F^d &(f_d, \ldots, f_1))  + \sum_{j=1}^{d-1} \mu^2_{\cat B}(F^j(f_d,\ldots,f_{d-j+1}), F^{d-j}(f_{d-j}, \ldots, f_1)) \\
= \sum_{n=0}^{d-1} & (-1)^{\maltese_n} F^d(f_d, \ldots, f_{n+2},\mu^1_{\cat A}(f_{n+1}),f_n,\ldots, f_1) \\
&+ \sum_{n=0}^{d-2}  (-1)^{\maltese_n} F^{d-1}(f_d,\ldots, f_{n+3}, \mu^2_{\cat A}(f_{n+2}, f_{n+1}), f_n, \ldots , f_1).
\end{split}
\end{equation}
In the even simpler case when $F \colon \bbcat E \to \cat B$ is an $A_\infty$-functor where $\bbcat E$ is a $\basering k$-linear category and $\cat B$ is a dg-category, the degree $d$ equation defining $F$ then reduces to the following:
\begin{equation} \label{eq:A_infty_funct-reduced}
\begin{split}
\mu^1_{\cat B}(F^d(f_d, \ldots, f_1)) & + \sum_{j=1}^{d-1} \mu^2_{\cat B}(F^j(f_d,\ldots, f_{d-j+1}), F^{d-j}(f_{d-j}, \ldots, f_1))\\
 = \sum_{n=0}^{d-2} & (-1)^{\maltese_n} F^{d-1}(f_d, \ldots f_{n+3}, \mu^2_{\bbcat E}(f_{n+2}, f_{n+1}),f_n, \ldots,f_1).
\end{split}
\end{equation}

It is also interesting to see what is the composition of an $A_\infty$-functor $F \colon \cat A \to \cat B$ (between dg-categories) with a dg-functor $G \colon \cat B \to \cat C$. Such a dg-functor, viewed as an $A_\infty$-functor, is characterised by having $G^d = 0$ for all $d>1$. Formula \eqref{eq:Ainf_composition_functors} becomes very simple:
\begin{equation}
(G \circ F)^d(f_d,\ldots,f_1) = G^1(F^d(f_d,\ldots,f_1)),
\end{equation}
for all $d \geq 1$. 
\end{remark}
Given $A_\infty$-categories $\cat A$ and $\cat B$, there is an \emph{$A_\infty$-category $\Funinf(\cat A, \cat B)$ \nomenclature{$\Funinf(\cat A, \cat B)$}{The $A_\infty$-category of strictly unital $A_\infty$-functors between two given $A_\infty$-categories} of (strictly unital) $A_\infty$-functors}. Its definition involves describing \emph{($A_\infty$-)natural transformations} of $A_\infty$-functors.
\begin{defin}
Let $F, G \colon \cat A \to \cat B$ be $A_\infty$-functors. A \emph{degree $g$ pre-natural transformation} $h \colon F \to G$ is consists of a sequence of maps $(h^0, h^1,\ldots)$ such that
\begin{equation*}
h^0 \colon X \mapsto h^0_X \in \cat B(F(X),G(X))^g, \quad X \in \cat A,
\end{equation*}
and $h^d$ is a family of multilinear maps
\begin{equation*}
h^d \colon \cat A(X_{d-1}, X_d) \otimes \ldots \otimes \cat A(X_0, X_1) \to \cat B(F(X_0),G(X_d))[g-d]
\end{equation*}
for any family of objects $X_0,\ldots,X_d \in \cat A$. Pre-natural transformations $F \to G$ form the graded vector space $\Funinf(\cat A,\cat B)(F,G)$. Compositions are described in \cite[Paragraph (1d)]{seidel-fukaya}. For example, we have that
\begin{equation*}
\mu^1(h)^0_X =\mu^1_{\cat B}(h^0_X), \quad \forall\, X \in \cat A. 
\end{equation*}
Moreover, we require the strict unitality condition:
\begin{equation} \label{eq:Ainf_nattrans_strictunit}
h^d(f_{d-1},\ldots,f_{n+1}, 1_{X_n}, f_n,\ldots, f_1) = 0,
\end{equation}
for all $d \geq 1$ and $0 \leq n < d$, with $f_k \in \cat A(X_{k-1}, X_k)$.
\end{defin} 
\begin{remark}
It is worth writing down the coboundary formula for a pre-natural transformation $h \colon F \to G$ when $F, G \colon \cat A \to \cat B$ are $A_\infty$-functors between dg-categories. If $d \geq 1$, we have:
\begin{equation} \label{eq:coboundary_nattrans}
\mu^1(h)^d (f_d, \ldots, f_1) = A^d - B^d,
\end{equation}
where
\begin{equation} \label{eq:Ad_nattrans}
\begin{split}
A^d = & \mu^1_{\cat B}(h^d(f_d,\ldots,f_1)) \\
&+ \mu^2_{\cat B}(G^d(f_d,\ldots,f_1), h_{X_0}) + (-1)^{\maltese_d(|h|-1)} \mu^2_{\cat B}(h_{X_d}, F^d(f_d,\ldots,f_1)) \\
& + \sum_{j=1}^{d-1} \mu^2_{\cat B}(G^j(f_d, \ldots, f_{d-j+1}), h^{d-j}(f_{d-j},\ldots,f_1)) \\
&+ \sum_{j=1}^{d-1} (-1)^{\maltese_{d-j}(|h|-1)} \mu^2_{\cat B}(h^j(f_d,\ldots,f_{d-j+1}), F^{d-k}(f_{d-j},\ldots, f_1)),
\end{split}
\end{equation}
and
\begin{equation} \label{eq:Bd_nattrans}
\begin{split}
B^d =& \sum_{n=0}^{d-1} (-1)^{\maltese_n +|h|-1} h^d(f_d, \ldots, f_{n+2},\mu^1_{\cat A}(f_{n+1}),f_n,\ldots, f_1) \\
&+ \sum_{n=0}^{d-2}  (-1)^{\maltese_n + |h|-1} h^{d-1}(f_d,\ldots, f_{n+3}, \mu^2_{\cat A}(f_{n+2}, f_{n+1}), f_n, \ldots , f_1),
\end{split}
\end{equation}
given composable morphisms $f_1,\ldots,f_d$ with first source $X_0$ and final target $X_d$. Notice that the term $B_d$ is similar to the right hand side of \eqref{eq:A_infty_funct-dg}.
\end{remark}

\subsection{Natural transformations}
Closed degree $0$ pre-natural transformations of $A_\infty$-functors are properly called \emph{natural transformations}. We are going to describe a useful characterisation of them; we start with a definition in the dg-setting:
\begin{defin} \label{defin:cap1_dgmor}
Let $\cat A$ be a dg-category (here, \emph{not} viewed as an $A_\infty$-category). The \emph{dg-category of (homotopy coherent) morphisms} $\dgmor \cat A$ \nomenclature{$\dgmor \cat A$}{The dg-category of homotopy coherent morphisms in a dg-category $\cat A$} is defined as follows. Objects are triples $(A,B,f)$, where $f \in Z^0(\cat A(A,B))$. A degree $n$ morphism $(A,B,f) \to (A',B',f')$ is given by a lower triangular matrix
\begin{equation*}
(u,v,h) = \begin{pmatrix}
u & 0 \\ h & v
\end{pmatrix},
\end{equation*}
where $u \in \cat A(A,A')^n$, $v \in \cat A(B,B')^n$ and $h \in \cat A(A,B)^{n-1}$. Compositions are defined by matrix multiplication with a sign rule:
\begin{equation*}
\begin{pmatrix}
u' & 0 \\ h' & v'
\end{pmatrix}
\begin{pmatrix}
u & 0 \\ h & v
\end{pmatrix} = 
\begin{pmatrix}
u'u & 0 \\ (-1)^n h'u+v'h & v'v
\end{pmatrix},
\end{equation*}
whenever $(u,v,h)$ has degree $n$. The differential of a morphism $(u,v,h) \colon (A,B,f) \to (A',B',f')$ of degree $n$ is defined by
\begin{equation*}
d \begin{pmatrix}
u & 0 \\ h & v
\end{pmatrix} = \begin{pmatrix}
du & 0 \\ dh + (-1)^n(f'u - vf) & dv
\end{pmatrix}.
\end{equation*}
\end{defin}
There are obvious ``source'' and ``target'' dg-functors:
\begin{align*}
& S \colon \dgmor \cat A \to \cat A, \quad (A,B,f) \mapsto A, \quad (u,v,h) \mapsto u, \\
& T \colon \dgmor \cat A \to \cat A, \quad (A,B,f) \mapsto B, \quad (u,v,h) \mapsto v.
\end{align*}
Notice that the chosen sign conventions in the definition of $\dgmor \cat A$ allow to define $S$ and $T$ in the simplest way, without any sign twist. 

We remark that there is a natural functor
\begin{equation} \label{eq:H0func_dgmor}
\begin{split}
H^0(\dgmor \cat A) & \to \mor H^0(\cat A), \\
(A,B,f) & \mapsto (A,B,[f]), \\
[(u,v,h)] & \mapsto ([u], [v]),
\end{split}
\end{equation}
where $\mor H^0(\cat A)$ \nomenclature{$\mor \cat C$}{The ordinary category of morphisms of $\cat C$} denotes the ordinary category of morphisms of $H^0(\cat A)$.
\begin{example} \label{example:dgmorA_Ainf}
Let $\cat A$ be a dg-category, now viewed as an $A_\infty$-category. Let us write down what happens when we view the dg-category of homotopy coherent morphisms $\cat Q =\dgmor \cat A$ as an $A_\infty$-category. First:
\begin{align*}
\mu^1_{\cat Q} & \begin{pmatrix}
u & 0 \\ h & v
\end{pmatrix} = (-1)^{|u|} \begin{pmatrix}
du & 0 \\ dh + (-1)^{|u|}(f'u-vf) & dv
\end{pmatrix} \\
&= (-1)^{|u|} \begin{pmatrix}
(-1)^{|u|} \mu^1_{\cat A}(u) & 0 \\
(-1)^{|u|-1} \mu^1_{\cat A}(h) + (-1)^{|u|}((-1)^{|u|}\mu^2_{\cat A}(f',u) - \mu^2_{\cat A} (v,f)) & (-1)^{|u|} \mu^1_{\cat A}(v)
\end{pmatrix} \\
&= \begin{pmatrix}
\mu^1_{\cat A}(u) & 0 \\
- \mu^1_{\cat A}(h) + (-1)^{|u|} \mu^2_{\cat A}(f',u) - \mu^2_{\cat A}(v,f) & \mu^1_{\cat A}(v)
\end{pmatrix}.
\end{align*}
Moreover:
\begin{align*}
\mu^2_{\cat Q} & \left( \begin{pmatrix}
u' & 0 \\ h' & v'
\end{pmatrix}, \begin{pmatrix}
u & 0 \\ h & v
\end{pmatrix} \right) \\
&= \begin{pmatrix}(-1)^{|u|} u'u & 0 \\ (-1)^{|u|}((-1)^{|u|} h'u + v'h) & (-1)^{|u|}v'v \end{pmatrix}\\
&= \begin{pmatrix}
\mu^2_{\cat A}(u',u) & 0 \\
(-1)^{|u|}\mu^2_{\cat A}(h',u) - \mu^2_{\cat A}(v',h) & \mu^2_{\cat A}(v',v)
\end{pmatrix}.
\end{align*}
\end{example}
Natural transformations of $A_\infty$-functors can now be characterised as ``directed homotopies'', in the sense explained by the following lemma.
\begin{lemma} \label{lemma:transform_hot}
Let $\cat A, \cat B$ be dg-categories. Let $F, G \colon \cat A \to \cat B$ be $A_\infty$-functors. There is a bijection between the set of (closed, degree $0$) natural transformations $F \to G$ and the set of $A_\infty$-functors $\varphi \colon \cat A \to \dgmor \cat B$ such that $S\varphi = F$ and $T \varphi = G$:
\begin{equation}
\varphi^d = (F^d, G^d, h^d) \leftrightarrow h^d.
\end{equation} 
\end{lemma}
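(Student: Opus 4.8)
The plan is to read off, from an $A_\infty$-functor $\varphi \colon \cat A \to \dgmor \cat B$, its three matrix components and to show that the defining equations of $\varphi$ decouple into the equations for $F$, for $G$, and for the off-diagonal datum $h$, the last being precisely the condition that $h$ is a closed, degree $0$, strictly unital natural transformation.

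First I would unwind the constraints $S\varphi = F$ and $T\varphi = G$. Since $S$ and $T$ are the source/target dg-functors, which on a morphism $(u,v,h)$ return $u$ and $v$ respectively, these constraints force each $\varphi^d$ to be a lower-triangular matrix whose diagonal entries are $F^d$ and $G^d$; writing the sub-diagonal entry as $h^d$, we obtain $\varphi^d = (F^d, G^d, h^d)$, with the degree bookkeeping of Definition \ref{defin:cap1_dgmor} ensuring that $h^d$ has exactly the degree required to be the $d$-th component of a degree $0$ pre-natural transformation. On objects, $\varphi(X) = (F(X), G(X), h^0_X)$, and for this triple to be a legitimate object of $\dgmor \cat B$ the element $h^0_X$ must lie in $Z^0(\cat B(F(X), G(X)))$, which is the $d=0$ instance of closedness of $h$.

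Next I would substitute $\varphi^d = (F^d, G^d, h^d)$ into the $A_\infty$-functor equation \eqref{eq:Ainf_equation} for $\varphi$. Because $\dgmor \cat B$ is a dg-category, only the terms with $r = 1, 2$ survive on the left-hand side, and I can evaluate them using the explicit formulas for $\mu^1_{\cat Q}$ and $\mu^2_{\cat Q}$ (with $\cat Q = \dgmor \cat B$) recorded in Example \ref{example:dgmorA_Ainf}. Reading off the $(1,1)$- and $(2,2)$-entries of the resulting matrix equation reproduces verbatim the $A_\infty$-functor equations \eqref{eq:A_infty_funct-dg} for $F$ and for $G$, which hold by hypothesis, so the entire content is carried by the sub-diagonal $(2,1)$-entry. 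The decisive step is to check that this off-diagonal entry equation is exactly $A^d - B^d = 0$, i.e. $\mu^1(h)^d = 0$, with $A^d$ and $B^d$ as in \eqref{eq:Ad_nattrans}--\eqref{eq:Bd_nattrans}: the $\mu^1_{\cat A}$-piece of $B^d$ arises from the off-diagonal of $\mu^1_{\cat Q}$ (namely $-\mu^1_{\cat A}(h) + (-1)^{|u|}\mu^2_{\cat A}(f',u) - \mu^2_{\cat A}(v,f)$, whose two $\mu^2_{\cat A}$-summands produce the $h^0$-multiplication terms of $A^d$), whereas the $\mu^2_{\cat A}$-piece of $B^d$ and the convolution sum $\sum_j$ come from the off-diagonal of $\mu^2_{\cat Q}(\varphi^j, \varphi^{d-j})$, equal to $(-1)^{|u|}\mu^2_{\cat A}(h', u) - \mu^2_{\cat A}(v', h)$. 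Matching these term by term is the main obstacle, but it is purely a matter of reconciling the sign $\maltese_n$, the degree-shift signs $(-1)^{|u|}$ of Example \ref{example:dgmorA_Ainf}, and the $(|h|-1)$-signs of \eqref{eq:Ad_nattrans} specialised to $|h| = 0$; no new idea is needed beyond careful bookkeeping.

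Finally I would match the unitality conditions: $\varphi^1(1_X) = 1_{\varphi(X)}$ unwinds, on the diagonal, to $F^1(1_X) = 1_{F(X)}$ and $G^1(1_X) = 1_{G(X)}$, and on the sub-diagonal to $h^1(1_X) = 0$, while the higher strict-unitality equations \eqref{eq:Ainf_functor_strictunit} for $\varphi$ give on the sub-diagonal exactly the conditions \eqref{eq:Ainf_nattrans_strictunit} for $h$. Since, with $F$ and $G$ fixed, both an $A_\infty$-functor $\varphi$ satisfying $S\varphi = F$, $T\varphi = G$ and a natural transformation $h \colon F \to G$ are determined by the sequence $(h^0, h^1, \ldots)$, and since the two families of defining conditions have just been shown to coincide, the assignment $\varphi^d = (F^d, G^d, h^d) \leftrightarrow h^d$ is the desired bijection.
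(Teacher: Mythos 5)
Your proposal follows essentially the same route as the paper's proof: write $\varphi^d = (F^d, G^d, h^d)$, substitute into the $A_\infty$-functor equation using the explicit formulas for $\mu^1_{\cat Q}$ and $\mu^2_{\cat Q}$ from Example \ref{example:dgmorA_Ainf}, observe that the diagonal entries reproduce the equations for $F$ and $G$, and identify the sub-diagonal entry with $\mu^1(h)^d = A^d - B^d = 0$; the remarks on degrees, on the object-level condition $h^0_X \in Z^0(\cat B(F(X),G(X)))$, and on strict unitality are all correct and match the paper. The one point to correct is your preview of how the terms line up: you attribute the $\mu^1_{\cat A}$-piece of $B^d$ to the off-diagonal of $\mu^1_{\cat Q}$ and the $\mu^2_{\cat A}$-piece of $B^d$ to the off-diagonal of $\mu^2_{\cat Q}$, but in fact the \emph{entire} term $-A^d$ comes from the left-hand side of \eqref{eq:A_infty_funct-dg} (the term $-\mu^1_{\cat B}(h^d(f_d,\ldots,f_1))$ in the off-diagonal of $\mu^1_{\cat Q}(\varphi^d)$ is the first summand of $-A^d$, not a source-differential term), while the \emph{entire} term $-B^d$ comes from the right-hand side, namely the third components of $\varphi^d(\ldots,\mu^1_{\cat A}(f_{n+1}),\ldots)$ and $\varphi^{d-1}(\ldots,\mu^2_{\cat A}(f_{n+2},f_{n+1}),\ldots)$, which your term-matching never mentions. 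This is a confusion between the differential and composition of the target $\cat B$ (which live in $A^d$) and those of the source $\cat A$ (which live in $B^d$); it does not invalidate the strategy, since carrying out the substitution as you describe would produce the correct identification, but the bookkeeping as written would send you looking for the $B^d$ terms in the wrong place.
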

\begin{proof}
Let $\varphi \colon \cat A \to \dgmor \cat B$ an $A_\infty$-functor as in the hypothesis. In particular, for any string of composable maps $f_1, \ldots, f_d$ with first source $X_0$ and final target $X_d$, we have
\begin{equation*}
\varphi^d(f_d, \ldots, f_1) = (F^d(f_d,\ldots, f_1), G^d(f_d,\ldots, f_1), h^d(f_d,\ldots,f_1))
\end{equation*}
as a morphism $(F(X_0),G(X_0),h_{X_0}) \to (F(X_d),G(X_d),h_{X_d})$. Notice that $F^d(\ldots)$ and $G^d(\ldots)$ have degree $|f_1| + \ldots + |f_d| +1 - d$, that is, $\maltese_d + 1$, whereas $h^d(\ldots)$ has degree $\maltese_d$. Now, we unwind the equation \eqref{eq:A_infty_funct-dg} which defines $\varphi$. By Example \ref{example:dgmorA_Ainf}, we have
\begin{equation*}
\mu^1(\varphi^d) = (\mu^1_{\cat B}(F^d), \mu^1_{\cat B}(G^d), -\mu^1_{\cat B}(h^d) + (-1)^{\maltese_d +1} \mu^2_{\cat B}(h_{X_d}, F^d) - \mu^2_{\cat B}(G^d, h_{X_0})).
\end{equation*}
Moreover:
\begin{align*}
\mu^2 &(\varphi^j(f_d,\ldots,f_{d-j+1}), \varphi^{d-j}(f_{d-j},\ldots,f_1)) \\
&= \mu^2((F^j,G^j,h^j),(F^{d-j}, G^{d-j}, h^{d-j})) \\
&=(\mu^2_{\cat B}(F^j,F^{d-j}), \mu^2_{\cat B}(G^j, G^{d-j}), (-1)^{\maltese_{d-j} +1}\mu^2_{\cat B}(h^j, F^{d-j}) - \mu^2_{\cat B}(G^j,h^{d-j})).
\end{align*}
Now, we find out that the left hand side of \eqref{eq:A_infty_funct-dg}, projected to the third component, is equal to the following:
\begin{align*}
-\mu^1_{\cat B}(h^d &(f_d,\ldots,f_1)) - \mu^2_{\cat B}(G^d(f_d,\ldots,f_1), h_{X_0}) - (-1)^{\maltese_d} \mu^2_{\cat B}(h_{X_d}, F^d(f_d,\ldots,f_1)) \\
- &\sum_{j=1}^{d-1} \mu^2_{\cat B}(G^j(f_d,\ldots,f_{d-j+1}),h^{d-j}(f_{d-j},\ldots,f_1)) \\
- & \sum_{j=1}^{d-1} (-1)^{\maltese_{d-j}} \mu^2_{\cat B}(h^j(f_d,\ldots,f_{d-j+1}), F^{d-j}(f_{d-j}, \ldots, f_1)).
\end{align*}
We immediately notice that the above term is equal to $-A^d$ when $|h|=0$ (see \eqref{eq:Ad_nattrans}). Moreover, the right hand side of \eqref{eq:A_infty_funct-dg}, projected to the third component, is equal to $-B^d$ when $|h|=0$ (see \eqref{eq:Bd_nattrans}). Now, it is clear that any $A_\infty$-functor $\varphi \colon \cat A \to \dgmor \cat B$ such that $S\varphi = F$ and $T \varphi = G$ defines a closed degree $0$ natural transformation $h \colon F \to G$, taking the projection of $\varphi$ to the third component; conversely, given $h \colon F \to G$ closed and of degree $0$, setting
\begin{equation*}
\varphi^d = (F^d, G^d, h^d)
\end{equation*}
we obtain an $A_\infty$-functor with the desired properties. Clearly, these mappings are mutually inverse. Moreover, the scrict unitality condition \eqref{eq:Ainf_functor_strictunit} for $\varphi$ is clearly equivalent to the strict unitality condition \eqref{eq:Ainf_nattrans_strictunit} for $h$.
\end{proof}
If $\cat A$ and $\cat B$ are dg-categories, then so is $\Funinf(\cat A, \cat B)$. Actually, this is an incarnation of $\RHom(\cat A, \cat B)$, as mentioned in \cite[Paragraph 4.3]{keller-dgcat}:
\begin{prop} \label{prop:RHom_Ainf}
The dg-category $\RHom(\cat A, \cat B)$ can be identified with the dg- category $\Funinf(\cat A, \cat B)$ of strictly unital $A_\infty$-functors from $\cat A$ to $\cat B$.
\end{prop}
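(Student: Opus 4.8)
The plan is to realise the asserted identification as an explicit quasi-equivalence between $\Funinf(\cat A, \cat B)$ and the full dg-subcategory of $\hproj{\cat B \otimes \opp{\cat A}}$ spanned by the right quasi-representable bimodules; the latter models $\RHom(\cat A, \cat B)$, its $H^0$ being $\rqrep{\cat B \otimes \opp{\cat A}}$ by Proposition \ref{prop:rep_quasifun}. Concretely, I would construct a dg-functor
\begin{equation*}
\Gamma \colon \Funinf(\cat A, \cat B) \to \hproj{\cat B \otimes \opp{\cat A}}
\end{equation*}
sending an $A_\infty$-functor $F$ to its \emph{graph bimodule} $\Gamma_F$, verify that it lands in the right quasi-representable part, and then prove that it is fully faithful on hom-complexes and essentially surjective.

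On objects, $\Gamma_F$ is the twisted bar complex built out of the representables $\cat B(-, F(X))$: as a graded bimodule its value at $(B, A)$ is assembled from $\cat B(B, F(X_d)) \otimes \cat A(X_{d-1}, X_d) \otimes \cdots \otimes \cat A(X_0, X_1)$ with $X_0 = A$, and its differential is twisted by the structure maps $F^d$ together with the compositions $\mu^d$. The $A_\infty$-functor equations \eqref{eq:Ainf_equation} are precisely what makes this differential square to zero, while the strict unitality \eqref{eq:Ainf_functor_strictunit} guarantees that the degenerate unit-insertion summands drop out; the resulting bimodule is semifree, hence h-projective. Collapsing the bar resolution yields $\Gamma_F(-, A) \qis \cat B(-, F(A))$, so $\Gamma_F$ is right quasi-representable with $H^0(\Gamma_F)$ the functor $A \mapsto F(A)$. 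On morphisms, a degree-$g$ pre-natural transformation $h \colon F \to G$ produces a degree-$g$ bimodule morphism $\Gamma_F \to \Gamma_G$ whose components are assembled from the $h^d$, and the coboundary formula \eqref{eq:coboundary_nattrans} shows that $\Gamma$ is a chain map on hom-complexes, hence a dg-functor. Here Lemma \ref{lemma:transform_hot} supplies the conceptual bookkeeping: a closed degree-$0$ natural transformation, viewed as an $A_\infty$-functor into $\dgmor \cat B$, maps to an honest morphism of graph bimodules.

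To see that $\Gamma$ is a quasi-equivalence I would argue in two steps. For full faithfulness, since each $\Gamma_F$ is a twisted bar complex of representables and hence cofibrant, the complex $\compdg{\cat B \otimes \opp{\cat A}}(\Gamma_F, \Gamma_G)$ already computes the derived hom; the bar differential on this complex then matches, term by term and sign by sign, the differential $\mu^1$ on the space of pre-natural transformations, giving a quasi-isomorphism $\Funinf(\cat A, \cat B)(F, G) \qis \compdg{\cat B \otimes \opp{\cat A}}(\Gamma_F, \Gamma_G)$. For essential surjectivity I would start from a right quasi-representable bimodule $T$, choose objects $F(A)$ with $T(-, A) \qis \cat B(-, F(A))$, and transfer the bimodule structure onto these representables by homological perturbation along the chosen quasi-isomorphisms; the transferred higher structure is exactly a strictly unital $A_\infty$-functor $F$ with $\Gamma_F \qis T$. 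Combined with Proposition \ref{prop:rep_quasifun}, this shows $H^0(\Gamma)$ hits every class in $H^0(\RHom(\cat A, \cat B))$.

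I expect the two delicate points to be the following. First, verifying that the derived hom-complex $\compdg{\cat B \otimes \opp{\cat A}}(\Gamma_F, \Gamma_G)$ coincides \emph{on the nose} with the pre-natural transformation complex requires carrying the sign conventions of \eqref{eq:coboundary_nattrans} and \eqref{eq:Ainf_nattrans_strictunit} faithfully through the bar identification; this is mechanical but error-prone. Second, and more substantially, the essential-surjectivity step rests on the homotopy-transfer argument, which is where the genuine homological content lies: one must extract explicit higher maps $F^d$ satisfying \eqref{eq:Ainf_equation} from an abstract quasi-representable bimodule while preserving strict unitality. This transfer, together with the omnipresent unitality bookkeeping, is the main obstacle; everything else is a careful but routine unwinding of the $A_\infty$ formalism already set up above.
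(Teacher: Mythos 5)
The paper offers no proof of this proposition to compare against: it is quoted from the literature (Keller's survey, Paragraph 4.3; the dg-level statement goes back to Lef\`evre-Hasegawa's thesis and was later revisited by Canonaco--Ornaghi--Stellari), so the author simply cites it. Your sketch, however, follows what is essentially the standard proof strategy in those references: send $F$ to the bar-type graph bimodule $\Gamma_F$, which is semifree (hence h-projective) and collapses in each variable onto $\cat B(-,F(A))$, so that $\Gamma$ lands in the model of $\RHom(\cat A,\cat B)$ by right quasi-representable cofibrant bimodules; then establish quasi-fully-faithfulness by comparing hom-complexes and essential surjectivity by homotopy transfer, which is available because $\basering k$ is a field. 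The two points you flag as delicate (sign and unitality bookkeeping; extracting a strictly unital $A_\infty$-structure from an abstract quasi-representable bimodule) are indeed where the real work lies. One inaccuracy worth correcting: the complex $\compdg{\cat B \otimes \opp{\cat A}}(\Gamma_F,\Gamma_G)$ does not match the pre-natural-transformation complex ``term by term''. Since $\Gamma_F$ is semifree, a bimodule map is determined by its values on generators, but those values land in $\Gamma_G$, which is itself a sum over tensor strings, not in $\cat B(F(X_0),G(X_d))$; and the naive assignment $(B,A)\mapsto \cat B(B,G(A))$ is \emph{not} a dg-bimodule when $G$ has nonzero higher components, so you cannot simply replace $\Gamma_G$ by it. The full-faithfulness step is therefore a comparison of quasi-isomorphic complexes via the collapse map of $\Gamma_G$, not an on-the-nose identification. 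This does not invalidate the plan, but as written that step would not go through literally.
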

The functor $\HOdir{\cat A}{\cat B}$ has clearly an incarnation in this setting:
\begin{equation}
\begin{split}
\HOdir{\cat A}{\cat B}  \colon & H^0(\Funinf(\cat A, \cat B))  \to \Fun(H^0(\cat A),H^0(\cat B)), \\
& F \mapsto H^0(F), \quad H^0(F)(f) = [F^1(f)], \\
& [h]_{\mu^1} \mapsto H^0(h), \quad H^0(h)_X = [h^0_X],
\end{split}
\end{equation}
where here $[\cdot]_{\mu^1}$ denotes the cohomology class with respect of the coboundary $\mu^1$ of $\Funinf(\cat A, \cat B)(F,G)$. Recalling Lemma \ref{lemma:transform_hot}, the action of the above functor on morphisms can also be viewed in terms of directed homotopies. Given $\varphi \colon \cat A \to \dgmor \cat B$ such that $S\varphi = F$ and $T\varphi =G$, we may identify $H^0(\varphi)$ to the ordinary functor
\begin{equation*}
H^0(\cat A) \to \mor(H^0(\cat B))
\end{equation*}
obtained by the following composition:
\begin{equation*}
H^0(\cat A) \to H^0(\dgmor \cat B) \xrightarrow{\eqref{eq:H0func_dgmor}} \mor(H^0(\cat B)).
\end{equation*}
\subsection{Uniqueness of dg-lifts}
The goal of this section is to prove a dg-lift uniqueness result using the formalism and techniques of $A_\infty$-functors. We will need the following (simplified) obstruction theory result, which can be proved with a direct computation. The analogue (general) result is proved for $A_\infty$-algebras in \cite[Corollaire B.1.5]{lefevre-Ainf}.
\begin{lemma} \label{lemma:cap5_obstructiontheory}
Let $\bbcat E$ be a $\basering k$-linear category, let $\cat B$ be a dg-category, and let $n \geq 2$ be an integer. Suppose that we have a finite sequence $(F^0, F^1, \ldots, F^{n-1})$, where $F^0 \colon \Ob \bbcat E \to \Ob \cat B, X \mapsto F(X) = F^0(X)$ and 
\begin{equation*}
F^d \colon \bbcat E(X_{d-1}, X_d) \otimes \ldots \otimes \bbcat E(X_0, X_1) \to \cat B(F(X_0),F(X_d))[1-d],
\end{equation*}
is a multilinear map, for all $d=1,\ldots,n-1$. Assume that \eqref{eq:A_infty_funct-reduced} is satisfied for all $d=1,\ldots,n-1$. Then, the expression
\begin{align*}
\sum_{j=0}^{n-2} & (-1)^{\maltese_j} F^{n-1}(f_n, \ldots f_{j+3}, \mu^2_{\bbcat E}(f_{j+2}, f_{j+1}),f_j, \ldots,f_1)  \\
 &- \sum_{j=1}^{n-1} \mu^2_{\cat B}(F^j(f_n,\ldots, f_{n-j+1}), F^{n-j}(f_{n-j}, \ldots, f_1))
\end{align*}
is a $\mu^1_{\cat B}$-cocycle, for any chain of composable maps $f_1,\ldots,f_n$.
\end{lemma}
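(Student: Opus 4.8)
The statement is the standard ``first obstruction'' computation of $A_\infty$-deformation theory, and the plan is to verify directly that $\mu^1_{\cat B}$ annihilates the given expression. Write it as $\Phi = P - Q$, where $P = \sum_{j=0}^{n-2}(-1)^{\maltese_j} F^{n-1}(f_n,\ldots,\mu^2_{\bbcat{E}}(f_{j+2},f_{j+1}),\ldots,f_1)$ collects the $F^{n-1}$-terms carrying one internal composition, and $Q = \sum_{j=1}^{n-1}\mu^2_{\cat B}(F^j(\ldots),F^{n-j}(\ldots))$ collects the bilinear terms. The goal is to show $\mu^1_{\cat B}(\Phi)=0$, using only the relations \eqref{eq:A_infty_funct-reduced} already available for $d\leq n-1$, together with the $A_\infty$-relations of $\cat B$ and the strict associativity of composition in $\bbcat{E}$.

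First I would expand $\mu^1_{\cat B}(Q)$ via the Leibniz rule $\mu^1_{\cat B}\mu^2_{\cat B} = \mu^2_{\cat B}(\mu^1_{\cat B}\otimes 1) \pm \mu^2_{\cat B}(1\otimes \mu^1_{\cat B})$ (the $d=2$ relation in $\cat B$), obtaining terms $\mu^2_{\cat B}(\mu^1_{\cat B}(F^j),F^{n-j})$ and $\mu^2_{\cat B}(F^j,\mu^1_{\cat B}(F^{n-j}))$; since $1\leq j,\,n-j\leq n-1$, I may substitute the defining relation \eqref{eq:A_infty_funct-reduced} for each inner $\mu^1_{\cat B}(F^{\cdot})$. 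In parallel I would expand $\mu^1_{\cat B}(P)$: each summand is $\mu^1_{\cat B}$ applied to $F^{n-1}$ evaluated on a length-$(n-1)$ string, so \eqref{eq:A_infty_funct-reduced} for $d=n-1$ applies directly and introduces a second internal $\bbcat{E}$-composition.

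After these substitutions, the resulting terms fall into three families, which I would show cancel among themselves. The bilinear terms produced from $\mu^1_{\cat B}(P)$ in which one factor $F^i$ receives the composite $\mu^2_{\bbcat{E}}(f_{j+2},f_{j+1})$ match exactly the $\mu^2_{\cat B}(F^j,F^{n-j})$-type terms generated by substituting \eqref{eq:A_infty_funct-reduced} into $\mu^1_{\cat B}(Q)$, and cancel against them. The purely quadratic terms $\mu^2_{\cat B}(\mu^2_{\cat B}(F^a,F^b),F^c)$ and $\mu^2_{\cat B}(F^a,\mu^2_{\cat B}(F^b,F^c))$ arising from $Q$ cancel in pairs by associativity of $\mu^2_{\cat B}$ (the $d=3$ relation in $\cat B$, where $\mu^3_{\cat B}=0$). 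Finally, the terms carrying two internal $\bbcat{E}$-compositions inside a single $F^{n-2}$ split into those where the two compositions act on disjoint pairs -- which cancel pairwise by the symmetry of the resulting double summation -- and those where they are nested, $\mu^2_{\bbcat{E}}(\mu^2_{\bbcat{E}}(-,-),-)$ versus $\mu^2_{\bbcat{E}}(-,\mu^2_{\bbcat{E}}(-,-))$, which cancel by the strict associativity of $\bbcat{E}$.

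The conceptual content is routine; the genuine difficulty is purely bookkeeping, namely tracking the Koszul signs $(-1)^{\maltese_\bullet}$ through the Leibniz and associativity substitutions and checking that each matched pair carries opposite signs. I would control this by fixing the indexing carefully -- recording, for each term, the order $F^{\cdot}$ and the positions of every inserted composition -- and verifying sign agreement term by term. This is exactly the specialization of \cite[Corollaire B.1.5]{lefevre-Ainf} to the present truncated, strictly unital situation, with $\mu^{\geq 3}_{\cat B}=0$ and $\mu^1_{\bbcat{E}}=0$ simplifying the bookkeeping considerably.
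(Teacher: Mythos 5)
Your outline is exactly the ``direct computation'' the paper alludes to: the paper itself offers no written proof of this lemma, asserting only that it follows from a direct computation and citing \cite[Corollaire B.1.5]{lefevre-Ainf}, and your decomposition into the three cancellation families (mixed terms matching \eqref{eq:A_infty_funct-reduced} substituted into the Leibniz expansion of the $\mu^2_{\cat B}$-sum, triple products cancelling by associativity of $\mu^2_{\cat B}$ via the $d=3$ relation with $\mu^3_{\cat B}=0$, and double internal compositions cancelling by index-swap antisymmetry and strict associativity of $\bbcat E$) is the correct skeleton of that computation. The only part left unverified is the Koszul sign bookkeeping, which you flag explicitly and which the paper likewise leaves to the reader.
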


Another key tool in our argument is the following lemma, which we first prove in the dg-framework, and then reinterpret with the $A_\infty$ notations:
\begin{lemma} \label{lemma:keylemma_lifting}
Let $\cat A$ be a dg-category, here \emph{not} viewed as an $A_\infty$-category. Let $(A,B,f)$ and $(A',B',f')$ be objects of $\dgmor \cat A$, and let $n \in \mathbb Z$ such that
\begin{equation*}
H^{n-1}(\cat A(A,B')) \cong 0.
\end{equation*}
Next, assume we are given a closed degree $n$ morphism $(u,v,h) \colon (A,B,f) \to (A',B',f')$. Then, if $u=d\tilde{u}$ and $v=d\tilde{v}$, there exists $\tilde{h} \colon A \to B'$ such that
\begin{equation*}
(u,v,h) = d(\tilde{u}, \tilde{v}, \tilde{h}).
\end{equation*}
\end{lemma}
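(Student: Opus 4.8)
The plan is to collapse the matrix identity $(u,v,h) = d(\tilde u, \tilde v, \tilde h)$ into a single cohomological solvability problem inside the complex $\cat A(A,B')$, and then to invoke the vanishing hypothesis $H^{n-1}(\cat A(A,B')) \cong 0$. First I would unwind, using the differential in Definition \ref{defin:cap1_dgmor}, what it means for $(u,v,h)$ to be closed of degree $n$: the diagonal entries give $du = 0$ and $dv = 0$, while the off-diagonal entry yields the relation $dh + (-1)^n(f'u - vf) = 0$. The diagonal conditions are automatically compatible with the assumptions $u = d\tilde u$ and $v = d\tilde v$, so the genuine content sits entirely in the off-diagonal slot.

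Next I would write out the candidate coboundary. Since $(\tilde u, \tilde v, \tilde h)$ has degree $n-1$, its differential has diagonal entries $d\tilde u = u$ and $d\tilde v = v$ (already matching by hypothesis) and off-diagonal entry $d\tilde h + (-1)^{n-1}(f'\tilde u - \tilde v f)$. Comparing with the target $h$ reduces the entire statement to finding $\tilde h \in \cat A(A,B')^{n-2}$ solving
\[
d\tilde h = h - (-1)^{n-1}(f'\tilde u - \tilde v f).
\]
Setting $w := h - (-1)^{n-1}(f'\tilde u - \tilde v f)$, the remaining task is purely to exhibit a primitive of $w$.

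To produce such a primitive I would verify that $w$ is a $\mu^1$-cocycle and then quote the vanishing hypothesis. By the Leibniz rule, together with the fact that $f$ and $f'$ are closed of degree $0$, one gets $d(f'\tilde u) = f'u$ and $d(\tilde v f) = vf$, hence $dw = dh - (-1)^{n-1}(f'u - vf)$. The off-diagonal closedness relation rearranges to $dh = (-1)^{n+1}(f'u - vf) = (-1)^{n-1}(f'u - vf)$, so the two terms cancel and $dw = 0$. Since $w$ has degree $n-1$ and $H^{n-1}(\cat A(A,B')) \cong 0$, any cocycle is a coboundary, and any chosen primitive $\tilde h$ gives $(u,v,h) = d(\tilde u, \tilde v, \tilde h)$.

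I expect no conceptual obstacle here: the argument is a direct computation, and the only thing requiring care is the sign bookkeeping in the off-diagonal entries. The delicate point is confirming that the sign $(-1)^{n-1}$ arising from differentiating the degree $(n-1)$ triple matches, after using $(-1)^{n+1} = (-1)^{n-1}$, the sign $(-1)^{n+1}$ coming from the closedness of the degree $n$ triple, so that the cancellation in $dw$ is exact rather than off by a sign.
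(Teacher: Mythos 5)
Your proposal is correct and follows essentially the same route as the paper: both reduce the matrix identity to the off-diagonal slot, observe that $h + (-1)^n(f'\tilde u - \tilde v f)$ (your $w$, up to the identity $-(-1)^{n-1}=(-1)^n$) is a degree $n-1$ cocycle by the Leibniz rule and the closedness of $(u,v,h)$, and then take a primitive using $H^{n-1}(\cat A(A,B'))\cong 0$. The sign bookkeeping you flag as the delicate point checks out exactly as you describe.
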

\begin{proof}
By hypothesis we have $d(u,v,h) = 0$, in particular
\begin{equation*}
dh + (-1)^n(f'u - vf) = 0.
\end{equation*}
Now, $f'u = d(f'\tilde{u})$ and $vf = d(\tilde{v} f)$, and so
\begin{equation*}
d(h + (-1)^n(f'\tilde{u} - \tilde{v}f)) = 0
\end{equation*}
In other words, $h + (-1)^n(f'\tilde{u} - \tilde{v}f)$ is a $(n-1)$-cocycle. Hence, by hypothesis, it is a $(n-1)$-coboundary:
\begin{equation*}
h + (-1)^n(f'\tilde{u} - \tilde{v}f) = d\tilde{h}.
\end{equation*}
Finally, we compute:
\begin{equation*}
d \begin{pmatrix}
\tilde{u} & 0 \\ \tilde{h} & \tilde{v} 
\end{pmatrix} = \begin{pmatrix}
u & 0 \\ h + (-1)^n(f'\tilde{u} - \tilde{v} f) + (-1)^{n-1}(f'\tilde{u} - \tilde{v} f) & v
\end{pmatrix} = \begin{pmatrix}
u & 0 \\ h & v
\end{pmatrix}. \qedhere
\end{equation*}
\end{proof}
\begin{lemma} \label{lemma:keylemma_Ainf}
Let $\cat A$ be a dg-category, now viewed as an $A_\infty$-category. Let $(A,B,f)$ and $(A',B',f')$ be objects in $\cat Q = \dgmor \cat A$ (viewed as an $A_\infty$-category), and let $n \in \mathbb Z$ such that
\begin{equation*}
H^{n-1}(\cat A(A,B')) = 0.
\end{equation*}
Next, assume that we are given a degree $n$ morphism $(u,v,h) \colon (A,B,f) \to (A',B',f')$ such that $\mu^1_{\cat Q}(u,v,h)=0$. Then, if $u = \mu^1_{\cat A}(\tilde{u})$ and $v= \mu^1_{\cat A}(\tilde{v})$, there exists $\tilde{h} \colon A \to B'$ such that
\begin{equation*}
(u,v,h) = \mu^1_{\cat Q}(\tilde{u}, \tilde{v}, \tilde{h}).
\end{equation*}
\end{lemma}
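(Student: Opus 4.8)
The plan is to reduce this statement to its already-established dg-incarnation, Lemma \ref{lemma:keylemma_lifting}. The crucial observation is that $\cat Q = \dgmor \cat A$ is itself a genuine dg-category; hence, by Remark \ref{remark:A_infty_funct-reduced}, its $A_\infty$-differential and its dg-differential are related by the universal sign rule $\mu^1_{\cat Q}(x) = (-1)^{|x|} d_{\cat Q}(x)$, and likewise $\mu^1_{\cat A}(y) = (-1)^{|y|} dy$ in $\cat A$. Every hypothesis and the desired conclusion can therefore be transported across this dictionary, the only subtlety being the degree-dependent signs.

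First I would translate the hypotheses. Since $(u,v,h)$ has degree $n$, the identity $\mu^1_{\cat Q}(u,v,h) = (-1)^n d_{\cat Q}(u,v,h)$ shows that the assumption $\mu^1_{\cat Q}(u,v,h) = 0$ is equivalent to $(u,v,h)$ being $d$-closed. Next, from $u = \mu^1_{\cat A}(\tilde{u}) = (-1)^{n-1} d\tilde{u}$ (note that $|\tilde{u}| = n-1$) I would set $\tilde{u}' = (-1)^{n-1}\tilde{u}$, so that $u = d\tilde{u}'$; analogously $v = d\tilde{v}'$ with $\tilde{v}' = (-1)^{n-1}\tilde{v}$. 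This is precisely the input required by Lemma \ref{lemma:keylemma_lifting}, whose vanishing hypothesis $H^{n-1}(\cat A(A,B')) = 0$ is identical to ours.

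Applying Lemma \ref{lemma:keylemma_lifting} then produces some $\tilde{h}'$ with $(u,v,h) = d_{\cat Q}(\tilde{u}', \tilde{v}', \tilde{h}')$. To convert the dg-differential back into $\mu^1_{\cat Q}$ I would rescale: since $(\tilde{u}', \tilde{v}', \tilde{h}')$ has degree $n-1$, one has $\mu^1_{\cat Q}(\tilde{u}', \tilde{v}', \tilde{h}') = (-1)^{n-1} d_{\cat Q}(\tilde{u}', \tilde{v}', \tilde{h}') = (-1)^{n-1}(u,v,h)$. Multiplying the primitive through by $(-1)^{n-1}$ and setting $\tilde{h} = (-1)^{n-1}\tilde{h}'$, the two sign factors cancel: the rescaled primitive equals $((-1)^{n-1}\tilde{u}', (-1)^{n-1}\tilde{v}', \tilde{h}) = (\tilde{u}, \tilde{v}, \tilde{h})$, recovering exactly the $\tilde{u}, \tilde{v}$ fixed in the hypothesis, and $\mu^1_{\cat Q}(\tilde{u}, \tilde{v}, \tilde{h}) = (u,v,h)$, as required.

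The main obstacle, such as it is, is purely bookkeeping: one must verify that the single degree-$(n-1)$ rescaling simultaneously turns $d_{\cat Q}$ into $\mu^1_{\cat Q}$ \emph{and} restores $\tilde{u}', \tilde{v}'$ to the prescribed $\tilde{u}, \tilde{v}$, which succeeds only because the two factors of $(-1)^{n-1}$ multiply to $+1$. Should one prefer to avoid invoking Lemma \ref{lemma:keylemma_lifting}, there is a direct alternative: by Example \ref{example:dgmorA_Ainf} the diagonal components of $\mu^1_{\cat Q}(\tilde{u}, \tilde{v}, \tilde{h}) = (u,v,h)$ hold automatically from $u = \mu^1_{\cat A}(\tilde{u})$ and $v = \mu^1_{\cat A}(\tilde{v})$, while the off-diagonal component reduces to solving $d\tilde{h} = (-1)^n(f'\tilde{u} - \tilde{v}f - h)$; a short computation using $\mu^1_{\cat Q}(u,v,h)=0$ shows the right-hand side is an $(n-1)$-cocycle, hence an $(n-1)$-coboundary by the vanishing hypothesis, which yields the desired $\tilde{h}$.
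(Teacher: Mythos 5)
Your proof is correct and follows essentially the same route as the paper: both reduce the statement to Lemma \ref{lemma:keylemma_lifting} via the sign dictionary $\mu^1 = (-1)^{|\cdot|}d$ of Remark \ref{remark:A_infty_funct-reduced}, the only (immaterial) difference being that the paper applies the dg-lemma to the rescaled closed morphism $(-1)^{n-1}(u,v,h)$ with primitives $\tilde u, \tilde v$, whereas you apply it to $(u,v,h)$ itself with rescaled primitives $(-1)^{n-1}\tilde u, (-1)^{n-1}\tilde v$ and undo the sign afterwards. The sign bookkeeping in your version checks out.
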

\begin{proof}
Recall Example \ref{example:dgmorA_Ainf}. $u= \mu^1_{\cat A}(\tilde{u})$ means $(-1)^{n-1} u = d\tilde{u}$, and $(-1)^{n-1}v = d\tilde{v}$. Apply Lemma \ref{lemma:keylemma_lifting} to $(-1)^{n-1}(u,v,h)$:
\begin{equation*}
(-1)^{n-1}(u,v,h) = d(\tilde{u}, \tilde{v}, \tilde{h}) = (-1)^{n-1} \mu^1_{\cat Q}(\tilde{u}, \tilde{v}, \tilde{h}),
\end{equation*}
and the claim follows.
\end{proof}

We are going to prove the following claim, which is actually a lifting result of natural transformations:
\begin{prop} \label{prop:lift_transformation}
Let $\bbcat E$ be a $\basering k$-linear category, viewed as a dg-category concentrated in degree $0$, and let $\cat B$ be a dg-category. Let $F, G \colon \bbcat E \to \cat B$ be quasi-functors, such that
\begin{equation} \label{eq:condition2}
H^j(\cat B(F(E), G(E'))) = 0,
\end{equation}
for all $j < 0$ and for all $E, E' \in \bbcat E$. Let $\overline{\varphi}\colon H^0(F) \to H^0(G)$ be a natural transformation. Then, there exists a morphism $\varphi \colon F \to G$ in $H^0(\RHom(\bbcat E, \cat B))$ such that $H^0(\varphi) = \overline{\varphi}$.
\end{prop}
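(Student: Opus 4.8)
The plan is to pass to the $A_\infty$-model and to build the lift inductively, one Taylor component at a time. By Proposition \ref{prop:RHom_Ainf} I identify $\RHom(\bbcat E, \cat B)$ with $\Funinf(\bbcat E, \cat B)$ and fix strictly unital $A_\infty$-functor representatives of the quasi-functors $F$ and $G$. A morphism $F \to G$ in $H^0(\RHom(\bbcat E, \cat B))$ is then the $\mu^1$-cohomology class of a closed degree $0$ natural transformation $h = (h^0, h^1, \dots) \colon F \to G$, whose image under $\HOdir{\bbcat E}{\cat B}$ is the natural transformation with components $[h^0_E]$. By Lemma \ref{lemma:transform_hot}, giving such an $h$ is the same as giving an $A_\infty$-functor $\varphi \colon \bbcat E \to \dgmor \cat B$ with $\varphi^d = (F^d, G^d, h^d)$, $S\varphi = F$ and $T\varphi = G$. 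So the goal becomes: construct the multilinear maps $h^d$ so that the reduced $A_\infty$-functor equations \eqref{eq:A_infty_funct-reduced} for $\varphi$ hold in $\cat Q := \dgmor \cat B$, subject to $[h^0_E] = \overline\varphi_E$.

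First I would settle the low-order terms by hand. For $d = 0$ I choose, for each $E \in \bbcat E$, a cocycle $h^0_E \in Z^0(\cat B(F(E), G(E)))$ representing $\overline\varphi_E$, so that $\varphi^0(E) = (F(E), G(E), h^0_E)$ is a legitimate object of $\dgmor \cat B$. For $d = 1$, reading off the third component of $\mu^1_{\cat Q}$ from Example \ref{example:dgmorA_Ainf}, the equation to solve is
\[
\mu^1_{\cat B}(h^1(f_1)) = \mu^2_{\cat B}(h^0_{X_1}, F^1(f_1)) - \mu^2_{\cat B}(G^1(f_1), h^0_{X_0}),
\]
the source and target components being automatic since $F^1, G^1$ are chain maps. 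In $H^0(\cat B)$ the right-hand side represents $\overline\varphi_{X_1} \circ H^0(F)(f_1) - H^0(G)(f_1) \circ \overline\varphi_{X_0}$, which vanishes \emph{precisely because $\overline\varphi$ is natural}; hence it is a $\mu^1_{\cat B}$-coboundary and $h^1(f_1)$ exists. As $\basering k$ is a field I fix once and for all a linear section of $\mu^1_{\cat B}$ on each hom-space to keep $h^1$ multilinear, and strict unitality of $F$, $G$ and $\cat B$ forces $h^1(1_E) = 0$.

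For the inductive step, assume $h^0, \dots, h^{d-1}$ (with $d \geq 2$) have been produced so that \eqref{eq:A_infty_funct-reduced} holds for $\varphi$ up to order $d-1$. The order-$d$ equation takes the shape $\mu^1_{\cat Q}(\varphi^d(f_d, \dots, f_1)) = \Xi^d$, where $\Xi^d$ is exactly the expression of Lemma \ref{lemma:cap5_obstructiontheory} built from $\varphi^0, \dots, \varphi^{d-1}$; by that lemma $\Xi^d$ is a $\mu^1_{\cat Q}$-cocycle. Applying the strict dg-functors $S$ and $T$ turns the order-$d$ equation for $\varphi$ into the order-$d$ equations for the honest $A_\infty$-functors $F$ and $G$, so the source and target components of $\Xi^d$ are the coboundaries $\mu^1_{\cat B}(F^d(\dots))$ and $\mu^1_{\cat B}(G^d(\dots))$. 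Since each $\varphi^d(f_d, \dots, f_1)$ has degree $1-d$, the cocycle $\Xi^d$ has degree $n = 2-d$, and the relevant obstruction group is $H^{n-1}(\cat B(F(X_0), G(X_d))) = H^{1-d}(\cat B(F(X_0), G(X_d)))$, which is zero by \eqref{eq:condition2} because $1-d < 0$. Lemma \ref{lemma:keylemma_Ainf}, applied with $\tilde u = F^d(\dots)$ and $\tilde v = G^d(\dots)$, then produces $h^d(f_d, \dots, f_1)$ with $\mu^1_{\cat Q}(\varphi^d(\dots)) = \Xi^d$, closing the induction; once more a linear section keeps $h^d$ multilinear and unitality gives $h^d(\dots, 1, \dots) = 0$.

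The outcome is a closed degree $0$ natural transformation $h \colon F \to G$, whose class $\varphi$ is the sought morphism in $H^0(\RHom(\bbcat E, \cat B))$, and by construction $H^0(\varphi)$ has components $[h^0_E] = \overline\varphi_E$, i.e. $H^0(\varphi) = \overline\varphi$. The hard part is the interaction between the degree bookkeeping and the hypothesis: the obstruction to defining $h^d$ lives in degree $1-d$, which is strictly negative exactly when $d \geq 2$, so the vanishing \eqref{eq:condition2} annihilates it and the lifting Lemma \ref{lemma:keylemma_Ainf} applies uniformly; the borderline order $d = 1$ lands in degree $0$, beyond the reach of \eqref{eq:condition2}, and must be rescued separately by naturality of $\overline\varphi$. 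The remaining effort is bookkeeping: tracking the $A_\infty$-signs through Example \ref{example:dgmorA_Ainf} and making the section and unitality choices simultaneously and $\basering k$-linearly.
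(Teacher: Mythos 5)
Your proposal is correct and follows essentially the same route as the paper's own proof: identify quasi-functors with strictly unital $A_\infty$-functors, encode the sought natural transformation as an $A_\infty$-functor into $\dgmor \cat B$ via Lemma \ref{lemma:transform_hot}, handle $d=0,1$ by hand using naturality of $\overline\varphi$, and kill the degree $2-d$ obstructions for $d \geq 2$ with Lemma \ref{lemma:cap5_obstructiontheory}, Lemma \ref{lemma:keylemma_Ainf} and the vanishing hypothesis. The only cosmetic difference is that you enforce multilinearity via a linear section of $\mu^1_{\cat B}$ where the paper defines the components on a chosen basis of the hom-spaces; over the field $\basering k$ these are the same device.
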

We obtain the following theorem, which is the announced dg-lift uniqueness result:
\begin{thm} \label{thm:dglift_unique_with_vanishing}
Let $\bbcat E$ be a $\basering k$-linear category, viewed as a dg-category concentrated in degree $0$, and let $\cat B$ be a triangulated dg-category. Let $F, G \colon \bbcat E \to \cat B$ be quasi-functors, such that
\begin{equation} \label{eq:condition}
H^j(\cat B(F(E), F(E'))) \cong 0,
\end{equation}
for all $j < 0$, for all $E, E' \in \bbcat E$. Let $\overline{\varphi}\colon H^0(F) \to H^0(G)$ be a natural isomorphism. Then, there exists an  isomorphism $\varphi \colon F \to G$ in $H^0(\RHom(\bbcat E, \cat B))$ such that $H^0(\varphi) = \overline{\varphi}$. 

In particular, set $\cat A = \perdg{\bbcat E}$, and view $\mathbb E$ as a full dg-subcategory of $\cat A$; if $F, G \colon \cat A \to \cat B$ are quasi-functors satisfying \eqref{eq:condition}, then $H^0(F_{|\mathbb E}) \cong H^0(G_{|\mathbb E})$ implies $F \cong G$ in $H^0(\RHom(\cat A, \cat B))$.
\end{thm}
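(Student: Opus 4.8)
The plan is to deduce the first assertion from Proposition~\ref{prop:lift_transformation} combined with the conservativity statement of Proposition~\ref{prop:H0_conservative}, and then to derive the ``in particular'' clause by reducing to generators via the homotopy universal property of the triangulated hull (Proposition~\ref{prop:triadgcat_uproperty}).

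First I would reconcile the two vanishing hypotheses. Proposition~\ref{prop:lift_transformation} requires $H^j(\cat B(F(E), G(E'))) = 0$ for $j<0$, with $G$ in the second slot, whereas the theorem only assumes $H^j(\cat B(F(E), F(E'))) \cong 0$. The natural isomorphism $\overline{\varphi}$ bridges this gap: each component $\overline{\varphi}_E \colon H^0(F)(E) \to H^0(G)(E)$ is an isomorphism in $H^0(\cat B)$, which means precisely that $F(E) \approx G(E)$ in $\cat B$. A homotopy equivalence $G(E') \to F(E')$ induces, by post-composition, a homotopy equivalence of complexes $\cat B(F(E), G(E')) \to \cat B(F(E), F(E'))$, hence a quasi-isomorphism, so that $H^j(\cat B(F(E), G(E'))) \cong H^j(\cat B(F(E), F(E'))) \cong 0$ for all $j<0$. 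With this in hand, Proposition~\ref{prop:lift_transformation} produces a morphism $\varphi \colon F \to G$ in $H^0(\RHom(\bbcat E, \cat B))$ with $H^0(\varphi) = \overline{\varphi}$. Since $\overline{\varphi}$ is an isomorphism and $\HOdir{\bbcat E}{\cat B}$ reflects isomorphisms by Proposition~\ref{prop:H0_conservative}, the morphism $\varphi$ is itself an isomorphism. This settles the first assertion.

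For the second assertion, set $\cat A = \perdg{\bbcat E}$ and write $F_{|\bbcat E}, G_{|\bbcat E} \colon \bbcat E \to \cat B$ for the restrictions along the Yoneda embedding $\bbcat E \hookrightarrow \cat A$. Restriction does not alter the values on objects of $\bbcat E$, so the vanishing hypothesis on $F$ is exactly the vanishing hypothesis for $F_{|\bbcat E}$, while the assumed isomorphism $H^0(F_{|\bbcat E}) \cong H^0(G_{|\bbcat E})$ supplies a natural isomorphism to which the first part applies; this yields $F_{|\bbcat E} \cong G_{|\bbcat E}$ in $H^0(\RHom(\bbcat E, \cat B))$. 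By Proposition~\ref{prop:triadgcat_uproperty} the restriction functor induces a quasi-equivalence $\RHom(\perdg{\bbcat E}, \cat B) \xrightarrow{\sim} \RHom(\bbcat E, \cat B)$, hence an equivalence after applying $H^0$; as equivalences reflect isomorphisms, $F_{|\bbcat E} \cong G_{|\bbcat E}$ forces $F \cong G$ in $H^0(\RHom(\cat A, \cat B))$.

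Modulo the cited results, the only genuinely new point is the transfer of the vanishing condition from the pair $(F,F)$ to the pair $(F,G)$ using $\overline{\varphi}$; everything else is a formal assembly of the conservativity of $\HOdir{\bbcat E}{\cat B}$ and the universal property of $\perdg{\bbcat E}$. I therefore expect no serious technical obstacle at this stage, the substantive work having been carried out in Proposition~\ref{prop:lift_transformation}.
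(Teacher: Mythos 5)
Your proposal is correct and follows essentially the same route as the paper: transfer the vanishing condition from the pair $(F,F)$ to $(F,G)$ using the natural isomorphism $\overline{\varphi}$, apply Proposition~\ref{prop:lift_transformation} to lift $\overline{\varphi}$, use Proposition~\ref{prop:H0_conservative} to see the lift is an isomorphism, and reduce the ``in particular'' clause to generators via the universal property of $\perdg{\bbcat E}$ (the paper packages this last step as Lemma~\ref{lemma:dglift_reduction_generators}, whose proof is exactly your appeal to Proposition~\ref{prop:triadgcat_uproperty}). Your explicit verification that a homotopy equivalence $G(E')\approx F(E')$ induces a quasi-isomorphism of hom-complexes is a welcome expansion of a step the paper merely asserts.
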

\begin{proof}
Since $H^0(F) \cong H^0(G)$ and $\cat B$ is triangulated, then \eqref{eq:condition2} holds. Then, the proof is a direct application of Proposition \ref{prop:lift_transformation}, Proposition \ref{prop:H0_conservative}. The second part of the statement follows from Lemma \ref{lemma:dglift_reduction_generators}.
\end{proof}

Upon identifying $\RHom(\bbcat E, \cat B)$ to $\Funinf(\bbcat E, \cat B)$, Proposition \ref{prop:lift_transformation} is translated to the following:
\begin{prop}
Let $\bbcat E$ be a $\basering k$-linear category, viewed as a dg-category concentrated in degree $0$, and let $\cat B$ be a dg-category. Let $F, G \colon \bbcat E \to \cat B$ be (strictly unital) $A_\infty$-functors, satisfying
\begin{equation} \label{eq:condition2-Ainf}
H^j(\cat B(F^0(E), G^0(E'))) = 0,
\end{equation}
for all $j < 0$, for all $E, E' \in \bbcat E$. Assume $\overline{\varphi}\colon H^0(F) \to H^0(G)$ is a natural transformation. Then, there exists an $A_\infty$-natural transformation $\varphi \colon F \to G$, such that $H^0(\varphi) = \overline{\varphi}$.
\end{prop}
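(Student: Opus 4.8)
The plan is to reformulate the construction of an $A_\infty$-natural transformation as the construction of a single $A_\infty$-functor into the dg-category of homotopy coherent morphisms, and then to build it one component at a time, passing each obstruction through the vanishing hypothesis by means of the lifting Lemma \ref{lemma:keylemma_Ainf}. By Lemma \ref{lemma:transform_hot}, giving a closed degree $0$ $A_\infty$-natural transformation $\varphi \colon F \to G$ is the same as giving an $A_\infty$-functor $\Phi \colon \bbcat E \to \cat Q = \dgmor \cat B$ with $S\Phi = F$ and $T\Phi = G$, where $\Phi^d = (F^d, G^d, \varphi^d)$; the requirement $H^0(\varphi) = \overline{\varphi}$ becomes the condition that $[\varphi^0_E] = \overline{\varphi}_E$ for every $E$. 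So I would construct the $\Phi^d$ by induction on $d$, keeping $F^d$ and $G^d$ fixed (they are prescribed by the given functors) and solving at each stage only for the third component $\varphi^d$.

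For the base steps: in degree $0$, I choose for each $E$ a cocycle $\varphi^0_E \in Z^0(\cat B(F^0(E), G^0(E)))$ representing $\overline{\varphi}_E$ and set $\Phi^0(E) = (F^0(E), G^0(E), \varphi^0_E)$. In degree $1$, equation \eqref{eq:A_infty_funct-reduced} for $\Phi$ (valued in $\cat Q$) reduces to the closedness $\mu^1_{\cat Q}(\Phi^1(f)) = 0$; reading off the third component with the formula of Example \ref{example:dgmorA_Ainf}, this amounts to solving
\[
\mu^1_{\cat B}(\varphi^1(f))=(-1)^{|F^1(f)|}\mu^2_{\cat B}(\varphi^0_{E_1},F^1(f))-\mu^2_{\cat B}(G^1(f),\varphi^0_{E_0})
\]
in degree $0$. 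Here the vanishing hypothesis does not apply, since the relevant cohomology lives in degree $0$; instead the right-hand side is a $\mu^1_{\cat B}$-coboundary \emph{precisely because} $\overline{\varphi}$ is natural, and that is what lets me choose $\varphi^1$. This is the only step genuinely depending on $\overline{\varphi}$ rather than on $F$, $G$ and the vanishing condition alone.

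For the inductive step with $d \geq 2$, assume $\Phi^0, \ldots, \Phi^{d-1}$ have been built so that the $A_\infty$-functor equations for $\Phi$ hold up to order $d-1$. I would form the order-$d$ obstruction
\[
\Xi^d=\sum_{n=0}^{d-2}(-1)^{\maltese_n}\Phi^{d-1}(f_d,\ldots,\mu^2_{\bbcat E}(f_{n+2},f_{n+1}),\ldots,f_1)-\sum_{j=1}^{d-1}\mu^2_{\cat Q}(\Phi^{j}(f_d,\ldots,f_{d-j+1}),\Phi^{d-j}(f_{d-j},\ldots,f_1)),
\]
a degree $2-d$ morphism of $\cat Q$; by Lemma \ref{lemma:cap5_obstructiontheory}, applied with target dg-category $\cat Q$, it is a $\mu^1_{\cat Q}$-cocycle. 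Its source and target projections, computed with the $\mu^2_{\cat Q}$ formula of Example \ref{example:dgmorA_Ainf}, are exactly the two sides of the $A_\infty$-relation \eqref{eq:A_infty_funct-reduced} for $F$ and for $G$, hence equal to $\mu^1_{\cat B}(F^d)$ and $\mu^1_{\cat B}(G^d)$. I then apply Lemma \ref{lemma:keylemma_Ainf} with $n = 2-d$, whose hypothesis $H^{n-1}(\cat B(F^0(X_0), G^0(X_d))) = H^{1-d}(\cat B(F^0(X_0), G^0(X_d))) = 0$ holds by \eqref{eq:condition2-Ainf} since $1-d < 0$; this yields a third component $\varphi^d$ with $\mu^1_{\cat Q}(F^d, G^d, \varphi^d) = \Xi^d$, which is the order-$d$ equation for $\Phi$. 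I would finally arrange strict unitality of $\varphi$ (vanishing of $\varphi^d$ on inputs containing an identity), which is possible because the obstructions vanish on such inputs by the strict unitality of $F$ and $G$.

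The main obstacle is the identification carried out in the inductive step: verifying that $\Xi^d$ is a $\mu^1_{\cat Q}$-cocycle whose $S$- and $T$-projections coincide exactly with $\mu^1_{\cat B}(F^d)$ and $\mu^1_{\cat B}(G^d)$. This is where the sign bookkeeping of the $A_\infty$-formalism and of the matrix differential of $\dgmor \cat B$ must be reconciled with the $A_\infty$-relations for $F$ and $G$; once it is in place, the vanishing hypothesis disposes of the remaining obstruction automatically through Lemma \ref{lemma:keylemma_Ainf}, and the conclusion $H^0(\varphi) = \overline{\varphi}$ holds by the choice of $\varphi^0$.
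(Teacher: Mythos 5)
Your proposal is correct and follows essentially the same route as the paper: reformulating via Lemma \ref{lemma:transform_hot} as the construction of an $A_\infty$-functor into $\dgmor \cat B$, using naturality of $\overline{\varphi}$ only at the degree-$1$ step, and killing the degree-$(2-d)$ obstruction cocycles for $d \geq 2$ via Lemma \ref{lemma:cap5_obstructiontheory} and Lemma \ref{lemma:keylemma_Ainf} under the vanishing hypothesis, with strict unitality handled by choosing zero lifts on identities. The sign and degree bookkeeping you flag matches the paper's computation, so there is nothing to add.
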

\begin{proof}
In view of Lemma \ref{lemma:transform_hot}, we try to define recursively a $A_\infty$-functor $\varphi \colon \bbcat E \to \dgmor \cat B$ such that $S \varphi = F, T \varphi = G$, and the induced functor 
\begin{equation*}
\bbcat E = H^0(\bbcat E) \to \mor (H^0(\cat B))
\end{equation*}
is equal to $\overline{\varphi}$. First, we define a map $\varphi^0$ on objects: for any $E \in \bbcat E$, we set
\begin{equation*}
\varphi^0(E) = (F^0(E),G^0(E), \varphi_E),
\end{equation*}
where $\varphi_E$ is a chosen lift of the given map $\overline{\varphi}_E \colon F^0(E) \to G^0(E)$. Next, we define $\varphi^1$ on a given basis (including the identities of all objects) of the space of morphisms. Given an element $f\colon E_0 \to E_1$ of this basis, we set
\begin{equation*}
\varphi^1(f) = (F^1(f), G^1(f), h^1(f)),
\end{equation*}
where $h^1(f)$ is a chosen degree $-1$ morphism such that
\begin{equation*}
-\mu^1_{\cat B}(h^1(f)) = \mu^2_{\cat B}(G^1(f), \varphi_{E_0}) - \mu^2_{\cat B} (\varphi_{E_1}, F^1(f)).
\end{equation*}
$h^1(f)$ exists by the hypothesis that  $\overline{\varphi} \colon H^0(F) \to H^0(G)$ is a natural transformation. Moreover, we may choose $h^1(1_E) = 0$ for all $E \in \bbcat E$. By construction, $\varphi^1(f)$ is a closed degree $0$ morphism in $\cat Q = \dgmor \cat B$ (see Example \ref{example:dgmorA_Ainf}), as required by \eqref{eq:A_infty_funct-reduced}, and $\varphi^1(1_E) = 1_{\varphi^0(E)}$.

Now, for $d \geq 2$, assume that we have defined a sequence of maps $(\varphi^1, \ldots, \varphi^{d-1})$ satisfying \eqref{eq:A_infty_funct-reduced} and strict unitality, with 
\begin{equation*}
\varphi^k(f_k, \ldots, f_1) = (F^k(f_k, \ldots, f_1), G^k(f_k, \ldots, f_1), h^k(f_k,\ldots, f_1)).
\end{equation*}
Given maps $f_i \colon E_{i-1} \to E_i$ in our chosen basis for $i=1,\ldots,d$, by Lemma \ref{lemma:cap5_obstructiontheory} the expression
\begin{equation} \label{eq:inductivestep}
\begin{split}
\sum_{n=0}^{d-2} & (-1)^{\maltese_n} \varphi^{d-1}(f_d, \ldots f_{n+3}, \mu^2_{\bbcat E}(f_{n+2}, f_{n+1}),f_n, \ldots,f_1)  \\
 &- \sum_{j=1}^{d-1} \mu^2_{\cat Q}(\varphi^j(f_d,\ldots, f_{d-j+1}), \varphi^{d-j}(f_{d-j}, \ldots, f_1))
\end{split}
\end{equation}
is a $\mu^1_{\cat Q}$-cocycle $(F^0(E_0), G^0(E_0), \varphi_{E_0}) \to (F^0(E_d),G^0(E_d), \varphi_{E_d})$, of degree $1-(d-1) = 2-d$. Since $F$ and $G$ are $A_\infty$-functors, we have that
\begin{equation*}
\eqref{eq:inductivestep} = (\mu^1_{\cat B}(F^d(f_d,\ldots,f_1), \mu^1_{\cat B}(G^d(f_d,\ldots,f_1), \cdots).
\end{equation*}
Then, the condition \eqref{eq:condition2-Ainf} allows us to apply Lemma \ref{lemma:keylemma_Ainf} (with $n=2-d$). We may choose $h^d(f_d, \ldots f_1)$ such that
\begin{equation*}
\eqref{eq:inductivestep} = \mu^1_{\cat Q}(F^d(f_d,\ldots,f_1), G^d(f_d,\ldots,f_1), h^d(f_d,\ldots, f_1)).
\end{equation*}
So, defining
\begin{equation*}
\varphi^d(f_d, \ldots, f_1) = (F^d(f_d,\ldots,f_1), G^d(f_d,\ldots,f_1), h^d(f_d,\ldots, f_1))
\end{equation*} 
we get the correct identity \eqref{eq:A_infty_funct-reduced}. Notice that, if one of the $f_i$ is an identity morphism, then expression \eqref{eq:inductivestep} vanishes, so in that case we may choose $h^d(f_d,\ldots,f_1)=0$, and hence $\varphi^d(f_d,\ldots, f_1) = 0$, which is the strict unitality condition. Finally, our result follows by recursion. 
\end{proof}
\section{Applications}
In this section we describe an application of the above technique which gives uniqueness results of Fourier-Mukai kernels. The dg-categories of interest in these applications are enhancements of Verdier quotients of the form $\dercomp{\bbcat A} / L$, where $\bbcat A$ is a $\basering k$-linear category and $L$ is a full subcategory of $\dercomp{\bbcat A}$ with suitable hypotheses. More precisely, we will work in the framework of the following result, whose proof is essentially contained in \cite[Section 6, first part]{orlov-dgenh}.
\begin{lemma} \label{lemma:localising_subcat_dercat}
Let $\bbcat A$ be a $\basering k$-linear category, viewed as a dg-category. Let $L \subseteq \dercomp{\bbcat A}$ be a localising subcategory (namely, strictly full and closed under direct sums), generated by compact objects $L^c = L \cap \dercomp{\bbcat A}^c$. \nomenclature{$\cat T^c$}{The subcategory of compact objects of a triangulated category $\cat T$} There is a canonical functor
\begin{equation} \label{eq:canonicalfunctor_localising_application}
\iota \colon \bbcat A \hookrightarrow \dercomp{\bbcat A}^c \to \dercomp{\bbcat A}^c / L^c \hookrightarrow (\dercomp{\bbcat A}/L)^c,
\end{equation}
where the composition of the last two maps is the restriction of the quotient functor $\dercomp{\bbcat A} \to \dercomp{\bbcat A}/L$. The triangulated category $(\dercomp{\bbcat A}/L)^c$ is the idempotent completion of $\dercomp{\bbcat A}^c / L^c$, and it is classically generated by the full subcategory with objects $\iota(\bbcat A)$. Moreover, if $\cat D$ together with the equivalence
\begin{equation*}
\epsilon \colon (\dercomp{\bbcat A}/L)^c \to H^0(\cat D)
\end{equation*}
is an enhancement of $(\dercomp{\bbcat A}/L)^c$, then $\cat D$ is quasi-equivalent to $\perdg{\bbcat A'}$, where $\bbcat A'$ is the full dg-subcategory of $\cat D$ whose object are given by $\epsilon(\iota(\bbcat A))$.
\end{lemma}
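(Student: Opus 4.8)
The plan is to split the statement into its triangulated-categorical assertions (existence of $\iota$, the identification of $(\dercomp{\bbcat A}/L)^c$ with the idempotent completion of $\dercomp{\bbcat A}^c/L^c$, and classical generation by $\iota(\bbcat A)$) and the dg-statement about $\cat D$, handling the former by Neeman's localisation theory and the latter by the homotopy universal property of the triangulated hull. For the triangulated part, I would first recall that the dg-Yoneda embedding realises $\bbcat A$ as a set of compact generators of $\dercomp{\bbcat A}$ and that $\dercomp{\bbcat A}^c = \per{\bbcat A}$ is the thick closure of these representables; composing Yoneda with the quotient $\dercomp{\bbcat A} \to \dercomp{\bbcat A}/L$, which carries compacts to compacts since $L$ is generated by $L^c$, produces $\iota$. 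The core input is Neeman's theorem: because $L$ is generated by the compact objects $L^c$, the quotient $\dercomp{\bbcat A}/L$ is compactly generated, the induced functor $\dercomp{\bbcat A}^c/L^c \to (\dercomp{\bbcat A}/L)^c$ is fully faithful, and its target is the idempotent completion of its essential image. Classical generation of $(\dercomp{\bbcat A}/L)^c$ by $\iota(\bbcat A)$ then follows, since the images of the representables classically generate $\dercomp{\bbcat A}^c/L^c$ and passing to the idempotent completion does not enlarge the classical generators.

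For the dg-statement I would first note that $(\dercomp{\bbcat A}/L)^c$ is idempotent complete, so its enhancement $\cat D$ is in fact triangulated and Proposition \ref{prop:triadgcat_uproperty} applies with codomain $\cat D$. The inclusion $j \colon \bbcat A' \hookrightarrow \cat D$ is a dg-functor, hence an object of $H^0(\RHom(\bbcat A', \cat D))$; transporting it along the quasi-equivalence $\RHom(\perdg{\bbcat A'}, \cat D) \isorightarrow \RHom(\bbcat A', \cat D)$ of \eqref{eq:perdg_univproperty} yields a quasi-functor $\tilde{j} \colon \perdg{\bbcat A'} \to \cat D$ whose restriction to $\bbcat A'$ recovers $j$. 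It then remains to show that $\tilde{j}$ is a quasi-equivalence.

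For quasi-full-faithfulness I would run a two-variable dévissage. Since $\bbcat A'$ is a \emph{full} dg-subcategory of $\cat D$, and the dg-Yoneda hom-complexes of $\perdg{\bbcat A'}$ restricted to $\bbcat A'$ agree with those of $\cat D$, the map $\tilde{j}$ induces quasi-isomorphisms on hom-complexes between objects of $\bbcat A'$; the full subcategory of objects on which this holds (fixing one variable and varying the other) is thick and contains $\bbcat A'$, hence equals $\perdg{\bbcat A'}$ because $\per{\bbcat A'}$ is the thick closure of the representables. For essential surjectivity I would observe that $H^0(\tilde{j})$ is a fully faithful exact functor out of the idempotent-complete category $\per{\bbcat A'}$, so its essential image is a thick subcategory; it contains $\epsilon(\iota(\bbcat A))$, and by the classical generation established above it is all of $(\dercomp{\bbcat A}/L)^c$. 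Hence $\tilde{j}$ is a quasi-equivalence and $\cat D \qe \perdg{\bbcat A'}$.

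The main obstacle is genuinely the first part: the precise assertion that $(\dercomp{\bbcat A}/L)^c$ is the idempotent completion of $\dercomp{\bbcat A}^c/L^c$ is exactly the Neeman--Thomason--Trobaugh localisation theorem for compactly generated categories, which I would cite (following \cite{orlov-dgenh}) rather than reprove. Within the dg-argument itself the only delicate points are checking that the subcategories arising in the dévissage are closed under direct summands and that the universal property returns a quasi-functor restricting to the inclusion \emph{on the nose}; once these are secured, the passage from classical generation to a quasi-equivalence is formal.
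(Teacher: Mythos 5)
Your proposal is correct and follows exactly the route the paper intends: the paper gives no proof of its own but defers to \cite[Section 6, first part]{orlov-dgenh}, where the triangulated assertions are obtained from the Neeman--Thomason--Trobaugh localisation theorem and the identification $\cat D \qe \perdg{\bbcat A'}$ from the universal property of the triangulated hull together with a d\'evissage over the classical generators, just as you describe. The two delicate points you flag (thickness of the d\'evissage subcategories, and that the universal property only recovers the inclusion up to isomorphism in $H^0(\RHom(\bbcat A', \cat D))$, which suffices) are handled the same way in the cited source.
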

Verdier quotients such as $\dercomp{\bbcat A}^c / L^c$ are enhanced by the \emph{Drinfeld dg-quotient}. We state its definition and main properties, which we will need in the following; they are directly taken from \cite[1.6.2]{drinfeld-dgcat}.
\begin{defin}
Let $\cat A$ be a dg-category, and let $\cat B$ be a full dg-subcategory of $\cat A$. A \emph{dg-quotient} of $\cat A$ modulo $\cat B$ is a dg-category $\cat A / \cat B$ together with a quasi-functor $\pi \colon \cat A \to \cat A / \cat B$, such that for any dg-category $\cat C$ the induced functor
\begin{equation} \label{eq:dgquot_universalprop}
\pi^* \colon H^0(\RHom(\cat A / \cat B, \cat C)) \to H^0(\RHom(\cat A, \cat C))
\end{equation}
is fully faithful, and its essential image consists of quasi-functors $F \colon \cat A \to \cat C$ such that $H^0(F)$ maps objects of $\cat B$ to zero objects in $H^0(\cat C)$.
\end{defin}
\begin{thm}
Let $\cat A$ be a dg-category, and let $\cat B$ be a full dg-subcategory of $\cat A$. A dg-quotient $(\cat A / \cat B, \pi)$ exists, and it is uniquely determined up to natural quasi-equivalence. Moreover, if $\cat A$ is pretriangulated and $H^0(\cat B)$ is a triangulated subcategory of $H^0(\cat A)$, then $(H^0(\cat A / \cat B), H^0(\pi))$ is a Verdier quotient of $H^0(\cat A)$ modulo $H^0(\cat B)$:
\begin{equation}
H^0(\cat A)/H^0(\cat B) \xrightarrow{\sim} H^0(\cat A / \cat B).
\end{equation}
\end{thm}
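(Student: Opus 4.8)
The plan is to realise $\cat A/\cat B$ through Drinfeld's explicit construction, deduce uniqueness formally from the universal property, and then compare $H^0$ with the Verdier quotient by a direct analysis of the quotient hom-complexes. For existence, I would take $\cat A/\cat B$ to have the same objects as $\cat A$ and freely adjoin, for every object $U \in \cat B$, a new endomorphism $\epsilon_U$ of degree $-1$ subject to $d\epsilon_U = 1_U$; concretely, a morphism of $\cat A/\cat B$ is a $\basering k$-linear combination of alternating words $g_m \epsilon_{U_m} g_{m-1} \cdots \epsilon_{U_1} g_0$ with the $g_i$ in $\cat A$ and $U_i \in \cat B$, the differential being extended by the Leibniz rule from $d\epsilon_U = 1_U$. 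The quasi-functor $\pi \colon \cat A \to \cat A/\cat B$ is the evident inclusion dg-functor; since $1_U = d\epsilon_U$ becomes null-homotopic, each $U \in \cat B$ satisfies $\pi(U) \cong 0$ in $H^0(\cat A/\cat B)$.

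For the universal property I would show that giving a quasi-functor $\cat A/\cat B \to \cat C$ amounts to giving a quasi-functor $F \colon \cat A \to \cat C$ together with a compatible choice of contracting homotopy for each $1_{F(U)}$, $U \in \cat B$; such data exists precisely when $1_{F(U)}$ is null-homotopic, i.e. when $H^0(F)(U) \cong 0$, and in that case the space of choices is contractible. Spelling this out at the level of the hom-complexes of $\RHom(\cat A/\cat B, \cat C)$ and $\RHom(\cat A, \cat C)$ shows that restriction along $\pi$ is fully faithful and identifies its source, up to quasi-equivalence, with the full sub-dg-category of those $F$ whose $H^0$ kills $\cat B$, which is exactly the content of \eqref{eq:dgquot_universalprop}. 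Uniqueness up to quasi-equivalence is then formal: the universal property exhibits $(\cat A/\cat B, \pi)$ as representing a fixed functor on $\kat{Hqe}$, so a Yoneda argument in $\kat{Hqe}$ makes any two dg-quotients canonically quasi-equivalent.

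For the second part, $H^0(\pi) \colon H^0(\cat A) \to H^0(\cat A/\cat B)$ sends every object of $H^0(\cat B)$ to a zero object, so the universal property of the Verdier quotient yields an exact comparison functor $\bar Q \colon H^0(\cat A)/H^0(\cat B) \to H^0(\cat A/\cat B)$, which is the identity on objects and hence essentially surjective. The hard part will be full faithfulness, which is the crux of the theorem: one must prove that the Drinfeld hom-complex $(\cat A/\cat B)(X,Y)$ computes, in zeroth cohomology, the Verdier morphism group $\Hom_{H^0(\cat A)/H^0(\cat B)}(X,Y)$, realised as a filtered colimit over roofs $X \leftarrow X' \to Y$ whose defect lies in $H^0(\cat B)$. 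I would match an alternating word $g_m \epsilon_{U_m} \cdots \epsilon_{U_1} g_0$ with such a zig-zag through the objects $U_i \in \cat B$, using the pretriangulated structure of $\cat A$ to build the cones involved and the hypothesis that $H^0(\cat B)$ is triangulated to keep these roofs within the prescribed class; filtering by the number of occurrences of $\epsilon$ and verifying that the resulting colimit stabilises to the Verdier hom-group is the principal technical obstacle.
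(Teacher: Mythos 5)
First, a point of comparison: the paper does not prove this statement at all --- it is imported verbatim from Drinfeld (the text says the definition and properties ``are directly taken from \cite[1.6.2]{drinfeld-dgcat}''), so there is no in-paper proof to measure your attempt against. Judged on its own terms, your proposal correctly identifies Drinfeld's construction: adjoining, for each $U \in \cat B$, a degree $-1$ endomorphism $\epsilon_U$ with $d\epsilon_U = 1_U$, with morphisms given by alternating words (this presentation is legitimate here because $\basering k$ is a field, so no h-flat resolution of $\cat A$ is needed). The observation that $1_{\pi(U)}$ becomes null-homotopic, and the Yoneda argument in $\kat{Hqe}$ for uniqueness, are both fine.

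The two substantive claims, however, are asserted rather than proved, and in both cases the assertion hides the entire content of the theorem. (1) The statement that a quasi-functor out of $\cat A/\cat B$ is ``a quasi-functor $F$ out of $\cat A$ plus a contractible space of contracting homotopies'', and that this yields full faithfulness of $\pi^*$ on $H^0(\RHom(-,\cat C))$ with the prescribed essential image, is precisely the universal property one is trying to establish; it is a genuine theorem (due to Tabuada/To\"en in this formulation) whose proof requires computing mapping spaces in the model category of dg-categories or passing through dg-modules. Note also that for a fixed $F$ the set of contracting homotopies of $1_{F(U)}$ is a torsor over the degree $-1$ cycles of $\cat C(F(U),F(U))$, which is not contractible in any naive sense; the contractibility is a statement about a derived mapping space and needs an actual argument. (2) For the Verdier comparison, your plan --- match alternating words with roofs and check that a filtration by the number of $\epsilon$'s ``stabilises to the Verdier hom-group'' --- is not a proof strategy that is known to work directly. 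The differential drops the $\epsilon$-count by one, so the hom-complex is a totalization of a bar-type construction, while the Verdier hom is a filtered colimit over a calculus of fractions; there is no evident map in either direction between these two descriptions, and ``verifying that the colimit stabilises'' is exactly the theorem. The standard proof instead embeds everything into $\dercomp{\cat A}$, identifies the derived category of $\cat A/\cat B$ with the quotient of $\dercomp{\cat A}$ by the localizing subcategory generated by $\cat B$ (Keller's localization theory), and then restricts along the Yoneda embedding. As written, your proposal names the right construction but leaves both pillars of the theorem as acknowledged gaps, with a proposed route for the second that would likely fail.
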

\begin{remark} \label{remark:keyremark_dglift_uniqueness}
Assume the framework of Lemma \ref{lemma:localising_subcat_dercat}. We know that the category $\dercomp{\bbcat A}^c$ has $\perdg{\bbcat A}$ as a dg-enhancement. Moreover, taking $\mathcal L^c$ to be the full dg-subcategory of $\perdg{\bbcat A}$ whose objects correspond to $L^c$, we find out that the dg-quotient $\perdg{\bbcat A} / \mathcal L^c$ is an enhancement of $\dercomp{\bbcat A}^c/L^c$. Moreover, since $(\dercomp{\bbcat A}/L)^c$ can be viewed as the idempotent completion of $\dercomp{\bbcat A}^c/L^c$, we find out that the dg-category
\begin{equation*}
\perdg{\perdg{\bbcat A} / \mathcal L^c}
\end{equation*}
is an enhancement of $(\dercomp{\bbcat A}/L)^c$. Without loss of generality, we may assume that the above functor $\iota$ is obtained in $H^0$ by the quasi-functor
\begin{equation}
\tilde{\iota} \colon \bbcat A \hookrightarrow \perdg{\bbcat A} \xrightarrow{\pi} \perdg{\bbcat A}/\mathcal L^c \hookrightarrow \perdg{\perdg{\bbcat A} / \mathcal L^c}.
\end{equation}
Notice that the quasi-functor $\perdg{\bbcat A} \xrightarrow{\pi} \perdg{\bbcat A}/\mathcal L^c$ is the canonical projection to the dg-quotient, and the fully faithful dg-functor $\perdg{\bbcat A}/\mathcal L^c \hookrightarrow \perdg{\perdg{\bbcat A} / \mathcal L^c}$ is the canonical inclusion. They are respectively involved with the universal properties \eqref{eq:dgquot_universalprop} and \eqref{eq:perdg_univproperty}.

Now, \cite[Theorem 2.8]{orlov-dgenh} tells us that, under the vanishing hypothesis
\begin{equation} \label{eq:i(A)_hypothesis}
(\dercomp{\bbcat A}/L)(\iota(A),\iota(A')[j]) \cong 0, \quad \forall\, j < 0, \quad \forall\, A,A' \in \bbcat A,
\end{equation}
the category $(\dercomp{\bbcat A}/L)^c$ admits a \emph{unique} dg-enhancement (up to quasi-equivalence). So, in that case, we are allowed to identify any such enhancement $\cat D$, up to quasi-equivalence, to the dg-category $\perdg{\perdg{\bbcat A} / \mathcal L^c}$.
\end{remark}
Now, the abstract result of the previous section allows us to prove the following:
\begin{thm} \label{thm:dglift_unique_quotient}
Assume the framework of Lemma \ref{lemma:localising_subcat_dercat}, and assume that $(\dercomp{\bbcat A}/L)^c$ has a unique enhancement. Let $\cat D$ be such an enhancement, and for simplicity identify $H^0(\cat D) = (\dercomp{\bbcat A}/L)^c$. Let $F, G \colon \cat D \to \cat B$ be quasi-functors taking values in a triangulated dg-category $\cat B$, satisfying the vanishing hypothesis:
\begin{equation} \label{eq:vanishing_i(A)}
H^0(\cat B)(F(\iota(A)), F(\iota(A'))[j]) \cong 0, \quad \forall\, j < 0,
\end{equation}
for all $A, A' \in \bbcat A$. Then, if 
\begin{equation*}
H^0(F) \circ \iota \cong H^0(G) \circ \iota \colon \bbcat A \to \cat B,
\end{equation*}
we have that $F \cong G$ as quasi-functors.
\end{thm}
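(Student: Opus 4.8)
The plan is to reduce everything to Theorem \ref{thm:dglift_unique_with_vanishing} by restricting the two quasi-functors along the $\basering k$-linear generators coming from $\bbcat A$. Since $(\dercomp{\bbcat A}/L)^c$ is assumed to have a unique enhancement, Remark \ref{remark:keyremark_dglift_uniqueness} lets me identify $\cat D$, up to quasi-equivalence, with $\perdg{\cat C}$ where $\cat C := \perdg{\bbcat A}/\mathcal L^c$, and lets me assume $\iota = H^0(\tilde\iota)$ for the quasi-functor
\begin{equation*}
\tilde\iota \colon \bbcat A \hookrightarrow \perdg{\bbcat A} \xrightarrow{\pi} \cat C \hookrightarrow \perdg{\cat C}.
\end{equation*}
Transporting $F$ and $G$ through the quasi-equivalence $\cat D \qe \perdg{\cat C}$, it suffices to prove $F \cong G$ as quasi-functors $\perdg{\cat C} \to \cat B$.

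The heart of the argument is to show that precomposition with $\tilde\iota$ induces a functor $\tilde\iota^* \colon H^0(\RHom(\perdg{\cat C}, \cat B)) \to H^0(\RHom(\bbcat A, \cat B))$ that reflects isomorphisms. I would factor $\tilde\iota^*$ along the three stages of $\tilde\iota$, exactly as in the mechanism of Lemma \ref{lemma:dglift_reduction_generators} but with the dg-quotient inserted in the middle: restriction along $\cat C \hookrightarrow \perdg{\cat C}$ is an equivalence by Proposition \ref{prop:triadgcat_uproperty} (using that $\cat B$ is triangulated); restriction $\pi^*$ along the dg-quotient projection $\pi \colon \perdg{\bbcat A} \to \cat C$ is fully faithful by the universal property \eqref{eq:dgquot_universalprop}; and restriction along the Yoneda embedding $\bbcat A \hookrightarrow \perdg{\bbcat A}$ is again an equivalence by Proposition \ref{prop:triadgcat_uproperty}. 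A composite of equivalences and a fully faithful functor is fully faithful, hence reflects isomorphisms, so $F \cong G$ will follow once I produce an isomorphism $F \circ \tilde\iota \cong G \circ \tilde\iota$.

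That last isomorphism is where Theorem \ref{thm:dglift_unique_with_vanishing} enters, applied to $F \circ \tilde\iota, G \circ \tilde\iota \colon \bbcat A \to \cat B$. Since $\bbcat A$ is concentrated in degree $0$ and $\cat B$ is triangulated, the theorem applies once the vanishing hypothesis \eqref{eq:condition} is verified, and this is precisely \eqref{eq:vanishing_i(A)}: on objects $(F \circ \tilde\iota)(A) = F(\iota(A))$, so $H^j(\cat B(F(\iota(A)), F(\iota(A')))) = H^0(\cat B)(F(\iota(A)), F(\iota(A'))[j])$ vanishes for all $j<0$. Moreover $H^0(F \circ \tilde\iota) = H^0(F) \circ \iota \cong H^0(G) \circ \iota = H^0(G \circ \tilde\iota)$ supplies the required natural isomorphism, so Theorem \ref{thm:dglift_unique_with_vanishing} yields $F \circ \tilde\iota \cong G \circ \tilde\iota$, and hence $F \cong G$.

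The step I expect to demand the most care is the factorisation of $\tilde\iota^*$: one must check that the three universal properties compose along the correct (contravariant) arrows, and in particular that the dg-quotient contributes a genuinely fully faithful functor rather than an equivalence, since $\cat C$ discards the objects of $\mathcal L^c$ relative to $\perdg{\bbcat A}$. The remaining bookkeeping — transporting $F,G$ along $\cat D \qe \perdg{\cat C}$, matching the two vanishing conditions, and identifying $H^0(\tilde\iota)$ with $\iota$ — is routine given Remark \ref{remark:keyremark_dglift_uniqueness}.
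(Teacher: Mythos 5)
Your proposal is correct and follows essentially the same route as the paper's proof: identify $\cat D$ with $\perdg{\perdg{\bbcat A}/\mathcal L^c}$ via Remark \ref{remark:keyremark_dglift_uniqueness}, reduce along the three universal properties (triangulated hull, dg-quotient, triangulated hull again) to the restriction along $\tilde\iota$, and conclude with Theorem \ref{thm:dglift_unique_with_vanishing}. The only cosmetic difference is that you phrase the reduction as full faithfulness of the composite restriction functor, which reflects isomorphisms, whereas the paper states each of the three steps as an equivalence of conditions.
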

\begin{proof}
Recalling Remark \ref{remark:keyremark_dglift_uniqueness}, we are allowed to identify $\cat D$ to $\perdg{\perdg{\bbcat A} / \mathcal L^c}$. By the universal property of $\perdg{\perdg{\bbcat A} / \mathcal L^c}$, we have that $F \cong G$ if and only if $F_{|\perdg{\bbcat A}/\mathcal L^c} \cong G_{|\perdg{\bbcat A}/\mathcal L^c}$. Then, by the universal property of the dg-quotient, this is equivalent to 
\begin{equation*}
F_{|\perdg{\bbcat A}/\mathcal L^c} \circ \pi \cong G_{|\perdg{\bbcat A}/\mathcal L^c} \circ \pi \colon \perdg{\bbcat A} \to \cat B.
\end{equation*}
Finally, by the universal property of $\perdg{\bbcat A}$, this is equivalent to
\begin{equation*}
F \circ \tilde{\iota} \cong G \circ \tilde{\iota} \colon \bbcat A \to \cat B.
\end{equation*}
Now, recalling that we have identified $\iota = H^0(\tilde{\iota})$, a direct application of Theorem \ref{thm:dglift_unique_with_vanishing} gives the desired result.
\end{proof}

The above result has an interesting application. Let $X$ be a quasi-projective scheme, viewed as open subscheme of a projective scheme $\overline{X}$. Then, the derived category $\dercat{\qcoh(X)}$ of quasi-coherent sheaves on $X$ can be described as a quotient $\dercomp{\bbcat A} / L$. Namely, take $\bbcat A$ as the category with objects given by the integers, and
\begin{equation}
\bbcat A(i,j) = H^0(\overline{X},\mathcal O_{\overline{X}}(j-i)),
\end{equation}
with composition induced by that of the graded algebra $\bigoplus_n H^0(\overline{X},\mathcal O_{\overline{X}}(n))$. The subcategory $L$ is taken to be the category of all objects in $\dercomp{\bbcat A}$ whose cohomologies are ``$I$-torsion modules'' (for details, see \cite[before Corollary 7.8]{orlov-dgenh}). It can be proved that there is an equivalence $\dercomp{\bbcat A} /L \cong \dercat{\qcoh(X)}$, and also that the natural functor
\begin{equation*}
\bbcat A \hookrightarrow \dercomp{\bbcat A} \to \dercomp{\bbcat A} / L \xrightarrow{\sim} \dercat{\qcoh(X)}
\end{equation*}
maps any integer $j \in \Ob \bbcat A$ to the sheaf $\mathcal O_X(j)$. Moreover, the subcategory $L$ satisfies the hypotheses of Lemma \ref{lemma:localising_subcat_dercat}, and in particular the above discussion restricts to compact objects and perfect complexes. Namely, we have an equivalence $(\dercomp{\bbcat A} /L)^c \cong \Perf(X)$ such that, composed with the functor \eqref{eq:canonicalfunctor_localising_application}, gives:
\begin{equation}
\begin{split}
& \bbcat A \xrightarrow{\iota} (\dercomp{\bbcat A}/L)^c \xrightarrow{\sim} \Perf(X), \\
& j \mapsto \mathcal O_X(j).
\end{split}
\end{equation}
Now, let $\dercatdg{\qcoh(X)}$ be an enhancement of $\dercat{\qcoh(X)}$, and for simplicity identify this category to $H^0(\dercatdg{\qcoh(X)})$. Recall that the full dg-subcategory $\Perfdg(X)$ of $\dercatdg{\qcoh(X)}$ whose objects are the compact objects in $\dercat{\qcoh(X)}$ is an enhancement of $\Perf(X)$; also, recall that these enhancements are uniquely determined, by \cite[Corollary 7.8, Theorem 7.9]{orlov-dgenh}. Upon identifying $(\dercomp{\bbcat A}/L)^c$ to $\Perf(X)$ via the equivalence discussed above, we immediately get the following:
\begin{coroll}
Let $X$ be a quasi-projective scheme, and let $\cat B$ be a triangulated dg-category. Let $F, G \colon \Perfdg(X) \to \cat B$ be quasi-functors which satisfy the vanishing condition
\begin{equation*}
H^0(\cat B)(F(\mathcal O_X(n)), F(\mathcal O_X(m))[j])=0, \quad \forall\, j < 0,
\end{equation*}
for all $n,m \in \mathbb Z$. Then, if $H^0(F) \cong H^0(G)$, we have that $F \cong G$ as quasi-functors.
\end{coroll}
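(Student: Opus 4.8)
The plan is to deduce the statement as a direct specialisation of Theorem~\ref{thm:dglift_unique_quotient} to the geometric setting recorded in the discussion immediately above. First I would recall that the quasi-projective scheme $X$ fits into the framework of Lemma~\ref{lemma:localising_subcat_dercat}: taking $\bbcat A$ to be the category whose objects are the integers, with $\bbcat A(i,j) = H^0(\overline X, \mathcal O_{\overline X}(j-i))$, and $L \subseteq \dercomp{\bbcat A}$ the localising subcategory of $I$-torsion objects, one has an equivalence $(\dercomp{\bbcat A}/L)^c \cong \Perf(X)$ under which the canonical functor $\iota$ of \eqref{eq:canonicalfunctor_localising_application} sends each integer $j$ to the line bundle $\mathcal O_X(j)$. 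Moreover, by the cited results of Orlov, both $\dercat{\qcoh(X)}$ and its subcategory of compact objects $\Perf(X)$ admit a \emph{unique} dg-enhancement; we identify the latter with $\Perfdg(X)$, so that the unique-enhancement hypothesis of Theorem~\ref{thm:dglift_unique_quotient} is satisfied with $\cat D = \Perfdg(X)$.

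With these identifications in place, the remaining hypotheses of Theorem~\ref{thm:dglift_unique_quotient} follow immediately. Since $\iota(n) = \mathcal O_X(n)$, the vanishing condition
\begin{equation*}
H^0(\cat B)(F(\mathcal O_X(n)), F(\mathcal O_X(m))[j]) = 0, \quad \forall\, j < 0,
\end{equation*}
assumed in the corollary is precisely the condition \eqref{eq:vanishing_i(A)} required by the theorem. Likewise, the hypothesis $H^0(F) \cong H^0(G)$ on all of $\Perf(X)$ restricts along $\iota$ to a natural isomorphism $H^0(F) \circ \iota \cong H^0(G) \circ \iota \colon \bbcat A \to \cat B$, which is exactly the condition triggering the conclusion of Theorem~\ref{thm:dglift_unique_quotient}.

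Having matched all hypotheses, I would simply invoke Theorem~\ref{thm:dglift_unique_quotient} to conclude that $F \cong G$ as quasi-functors. Because every non-trivial ingredient is already packaged either into the preceding geometric discussion or into Theorem~\ref{thm:dglift_unique_quotient}, there is no genuine obstacle to overcome here; the only point demanding care is the bookkeeping that the dictionary $(\dercomp{\bbcat A}/L)^c \cong \Perf(X)$ is compatible \emph{on objects} with $\iota$, i.e.\ that it really does identify the generator $\iota(j)$ with $\mathcal O_X(j)$ --- and this is precisely the identification carried out in the discussion preceding the statement.
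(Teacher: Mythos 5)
Your proposal is correct and follows exactly the paper's route: the paper derives this corollary by the same specialisation of Theorem~\ref{thm:dglift_unique_quotient} to the setup $\bbcat A(i,j) = H^0(\overline X, \mathcal O_{\overline X}(j-i))$ with $\iota(j) = \mathcal O_X(j)$ and the uniqueness of enhancements from Orlov, all packaged in the discussion immediately preceding the statement. No discrepancies to report.
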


Finally, we apply this machinery to the uniqueness problem of Fourier-Mukai kernels, as explained in Section \ref{section:geometric_motivation}, hence obtaining the following uniqueness result:
\begin{thm} \label{thm:cap5_uniqueness_geometricapplication}
Let $X$ and $Y$ be schemes satisfying the hypotheses of Theorem \ref{thm:dglift_fouriermukai}, with $X$ quasi-projective. Let $\mathcal E, \mathcal E' \in \dercat{\qcoh(X \times Y)}$ be such that
\begin{equation*}
{\Phi^{X \to Y}_\mathcal{E}} \cong {\Phi^{X \to Y}_\mathcal{E'}} \cong F \colon \Perf(X) \to \dercat{\qcoh(Y)},
\end{equation*}
and $\Hom(F(\mathcal O_X(n)), F(\mathcal O_X(m))[j]) = 0$ for all $j < 0$, for all $n,m \in \mathbb Z$. Then $\mathcal E \cong \mathcal E'$.
\end{thm}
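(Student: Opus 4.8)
The plan is to transport the problem from Fourier--Mukai kernels to quasi-functors, apply the abstract uniqueness result, and then transport the conclusion back. First I would invoke Theorem~\ref{thm:toen_FM}: taking $H^0$ of the isomorphism \eqref{eq:toen_FM} in $\Hqe$ yields an equivalence
\[
\dercat{\qcoh(X \times Y)} \xrightarrow{\sim} H^0(\RHom(\Perfdg(X), \dercatdg{\qcoh(Y)})),
\]
under which the kernels $\mathcal E$ and $\mathcal E'$ correspond to quasi-functors $\tilde F, \tilde G \colon \Perfdg(X) \to \dercatdg{\qcoh(Y)}$. By Theorem~\ref{thm:dglift_fouriermukai} this equivalence intertwines $\Phi^{X \to Y}_-$ with $\HOdir{\Perfdg(X)}{\dercatdg{\qcoh(Y)}}$, so that $H^0(\tilde F) \cong \Phi_{\mathcal E} \cong F$ and $H^0(\tilde G) \cong \Phi_{\mathcal E'} \cong F$; in particular $H^0(\tilde F) \cong H^0(\tilde G)$ as functors $\Perf(X) \to \dercat{\qcoh(Y)}$.

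Next I would set up the hypotheses of the preceding Corollary (equivalently, of Theorem~\ref{thm:dglift_unique_quotient}) for the pair $\tilde F, \tilde G$, with codomain $\cat B = \dercatdg{\qcoh(Y)}$. This dg-category is triangulated, because $\dercat{\qcoh(Y)}$ has arbitrary coproducts and is therefore idempotent complete. Under the identification $(\dercomp{\bbcat A}/L)^c \cong \Perf(X)$ recalled above, the object $\mathcal O_X(n)$ is the image $\iota(n)$ of the generator $n \in \bbcat A$; since $H^0(\tilde F)(\mathcal O_X(n)) \cong F(\mathcal O_X(n))$, the assumed vanishing $\Hom(F(\mathcal O_X(n)), F(\mathcal O_X(m))[j]) = 0$ for all $j < 0$ and all $n,m$ is precisely the condition \eqref{eq:vanishing_i(A)}. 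The Corollary then yields $\tilde F \cong \tilde G$ as quasi-functors.

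Finally, since the map of Theorem~\ref{thm:toen_FM} is an equivalence of categories, the isomorphism $\tilde F \cong \tilde G$ pulls back to $\mathcal E \cong \mathcal E'$ in $\dercat{\qcoh(X \times Y)}$, which is the claim. Essentially all the conceptual work is carried by the prior theorems, so the argument is an assembly of the pieces; the main obstacle is bookkeeping, namely verifying that the equivalence furnished by Theorem~\ref{thm:toen_FM} is the very one appearing in the bridge of Theorem~\ref{thm:dglift_fouriermukai}, so that the two commutative diagrams may be composed, and that the vanishing hypothesis is correctly matched to \eqref{eq:vanishing_i(A)} through the identification $\mathcal O_X(n) \leftrightarrow \iota(n)$.
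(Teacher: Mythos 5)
Your proposal is correct and follows the same route the paper intends: the paper gives no separate proof of this theorem, presenting it as the direct assembly of Theorem \ref{thm:toen_FM}, Theorem \ref{thm:dglift_fouriermukai}, and the preceding Corollary (itself a consequence of Theorem \ref{thm:dglift_unique_quotient}), exactly as you do. Your bookkeeping checks (compatibility of the two equivalences, the identification $\mathcal O_X(n) \leftrightarrow \iota(n)$, and the idempotent completeness of $\dercat{\qcoh(Y)}$) are the right points to verify and are handled correctly.
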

\bibliographystyle{alpha}
\bibliography{biblio}
\end{document}